\documentclass[12pt,a4paper]{article}
\usepackage[utf8]{inputenc}
\usepackage{amsmath, amssymb, amsthm, verbatim, hyperref}
\usepackage{mathrsfs}
\usepackage[dvipsnames]{xcolor}
\usepackage{ragged2e}
\usepackage{marginnote}
\usepackage{enumerate}
\usepackage{makecell}
\usepackage{longtable}
\usepackage{epsfig}
\usepackage{tikz}
\usepackage{ytableau}
\usepackage{hyperref}
\usepackage{caption}
\usepackage{subcaption}
\hypersetup{hidelinks}
\usepackage{fancyvrb}
\usepackage[left=2.50cm, right=2.50cm, top=2.00cm, bottom=2.00cm]{geometry}
\usetikzlibrary {arrows.meta}

\theoremstyle{plain}

\newenvironment{repthm}[1]
  {\innercustomthm}
  {\endinnercustomthm}

\newenvironment{repcor}[1]
  {\innercustomcor}
  {\endinnercustomthm}

\newtheorem{thm}{Theorem}[section]
\newtheorem{prop}[thm]{Proposition}
\newtheorem{cor}[thm]{Corollary}
\newtheorem{lemma}[thm]{Lemma}
\newtheorem{conjecture}[thm]{Conjecture}

\theoremstyle{definition}
\newtheorem{definition}[thm]{Definition}
\theoremstyle{remark}

\newtheorem{remark}[thm]{Remark}

\newcommand{\ZZ}{\mathbb{Z}}

\newcommand{\RR}{\mathbb{R}}

\newcommand{\BB}{\mathcal{B}}
\newcommand{\BBB}{\mathfrak{B}}
\newcommand{\AAA}{\mathscr{A}}
\newcommand{\DDD}{\mathfrak{D}}
\newcommand{\PPP}{\mathfrak{P}}
\newcommand{\SSS}{\mathfrak{S}}
\newcommand{\des}{\operatorname{des}}
\newcommand{\Ass}{\textsf{Ass}}

\title{Poset Associahedra and Stack-sorting}
\author{Son Nguyen, Andrew Sack\thanks{This material is based upon work supported by the National Science Foundation Graduate Research Fellowship Program under Grant No. DGE-2034835 and National Science Foundation Grants No. DMS-1954121 and DMS-2046915. Any opinions, findings, and conclusions or recommendations expressed in this material are those of the author(s) and do not necessarily reflect the views of the National Science Foundation.}}

\date{\vspace{-2em}}

\allowdisplaybreaks


\begin{document}
\ytableausetup{centertableaux}

\maketitle

    \begin{abstract}
    For any finite connected poset $P$, Galashin introduced a simple convex $(|P|-2)$-dimensional polytope $\AAA(P)$ called the poset associahedron. For a certain family of posets, whose poset associahedra interpolate between the classical permutohedron and associahedron, we give a simple combinatorial interpretation of the $h$-vector. Our interpretation relates to the theory of stack-sorting of permutations. It also allows us to prove real-rootedness of some of their $h$-polynomials.
    \end{abstract}

\tableofcontents


\section{Introduction}\label{sec:intro}

    For a finite connected poset $P$, Galashin introduced the \textit{poset associahedron} $\AAA(P)$ (see \cite{galashin2021poset}). The faces of $\AAA(P)$ correspond to {\it tubings} of $P$, and the vertices of $\AAA(P)$ correspond to {\it maximal tubings} of $P$;  see Section~\ref{sec:poset-ass} for the definitions. $\AAA(P)$ can also be described as a compactification of the configuration space of order-preserving maps $P \rightarrow \RR$. Many polytopes can be described as poset associahedra, including permutohedra and associahedra. In particular, when $P$ is the claw poset, i.e. $P$ consists of a unique minimal element $0$ and $n$ pairwise-incomparable elements, then $\AAA(P)$ is the $n$-permutohedron. On the other hand, when $P$ is a chain of $n+1$ elements, i.e. $P = C_{n+1}$, then $\AAA(P)$ is the associahedron $K_{n+1}$. Among many different combinatorial interpretations for the $h$-vector $(h_0,h_1,\ldots,h_{n-1})$ of $K_{n+1}$, we want to recall the following interpretation: $h_i$ counts the number of 231-avoiding permutations with exactly $i$ descents. 

    Stack-sorting is a function $s: \SSS_n \rightarrow \SSS_n$ which attempts to sort the permutations $w$ in $\SSS_n$ in linear time, not always sorting them completely (see definition in Section \ref{subsec:stack-sorting}). A permutation $w\in S_n$ is stack-sortable if $s(w) = 12\ldots n$. It is well-known that stack-sortable permutations are exactly 231-avoiding permutations. Thus, we have an alternative interpretation for the $h$-vector of $\AAA(C_{n+1})$: $h_i$ counts the number of permutations in $s^{-1}(12\ldots n)$ with exactly $i$ descents.

    The focus of our paper is the posets $A_{n,k} = C_{n+1} \oplus A_k$ where $A_k$ is the antichain of $k$ elements. In particular, $A_{0,k}$ is a claw poset, and $A_{n,0}$ is the chain $C_{n+1}$. Surprisingly, the $h$-vector of $\AAA(A_{n,k})$ is also counted by descents of stack-sorting preimages. Let $\SSS_{n,k} = \{w~|~w\in \SSS_{n+k}, w_i = i~\text{for all}~i>k\}$, we prove the following generalization of the above classic result.

    \begin{repthm}{\ref{thm:A_n,k-h-vector}}
        Let $h = (h_0, h_1, \ldots, h_{n+k-1})$ be the $h$-vector of $\AAA(A_{n,k})$. Then $h_i$ counts the number of permutations in $s^{-1}(\SSS_{n,k})$ with exactly $i$ descents.
    \end{repthm}

    An immediate corollary of Theorem \ref{thm:A_n,k-h-vector} is $\gamma$-nonnegativity of $\AAA(A_{n,k})$. In particular, we have the following result by Br\"anden.

    \begin{repthm}{\ref{thm:ss-preimage-gamma}}[\cite{branden2008actions}]
        For $A \subseteq \SSS_n$, we have
        \[ \sum_{\sigma \in s^{-1}(A)} x^{\des(\sigma)} = \sum_{m = 0}^{\lfloor \frac{n-1}{2}\rfloor} \dfrac{|\{ \sigma \in s^{-1}(A)~:~\text{peak}(\sigma) = m \}|}{2^{n-1-2m}} x^m(1+x)^{n-1-2m}, \]
        where $\text{peak}(\sigma)$ is the number of index $i$ such that $\sigma_{i-1} < \sigma_i > \sigma_{i+1}$.
    \end{repthm}

    Thus, we have the following corollary.

    \begin{repcor}{\ref{cor:A_n,k-gamma-nonnegative}}
        The $\gamma$-vector of $\AAA(A_{n,k})$ is nonnegative.
    \end{repcor}

    In addition, in the process of proving Theorem \ref{thm:A_n,k-h-vector}, we find the size of $s^{-1}(\SSS_{n,k})$ in terms of $k!$ and the Catalan convolution $C_n^{(k)}$, which will be introduced in Section \ref{subsec:catalan-convolution}.

    \begin{repcor}{\ref{cor:ss-size}}
        For all $n,k \geq 0$, we have
        \[ |s^{-1}(\SSS_{n,k})| = k! \cdot C_n^{(k)}. \]
    \end{repcor}

    Note that $C_n^{(0)}$ is the classic Catalan number $C_n$. Thus, setting $k=0$ in Corollary \ref{cor:ss-size}, we obtain the classic result that $s^{-1}(12\ldots n) = C_n$. Finally, in Section \ref{sec:real-rootedness}, we will use a ``happy coincidence'' in stack-sorting to prove real-rootedness of the $h$-polynomials of $\AAA(A_{n,2})$.

    \begin{repthm}{\ref{thm:A_2-real-rooted}}
        Let $H_n(x)$ be the $h$-polynomial of $\AAA(A_{n,2})$. Then, $H_n(x)$ is real-rooted.
    \end{repthm}

\section{Definition}\label{sec:definition}

\subsection{Polytope and face numbers}\label{subsec:face_numbers}

    A \textit{convex polytope} $P$ is the convex hull of a finite collection of points in $\RR^n$. The \textit{dimension} of a polytope is the dimension of its affine span. A face $F$ of a convex polytope $P$ is the set of points in $P$ where some linear functional achieves its maximum on $P$. Faces that consist of a single point are called \textit{vertices} and $1$-dimensional faces are called \textit{edges} of $P$. A $d$-dimensional polytope $P$ is \textit{simple} if any vertex of $P$ is incident to exactly $d$ edges.

    For a $d$-dimensional polytope $P$, the \textit{face number} $f_i(P)$ is the number of $i$-dimensional faces of $P$. In particular, $f_{0}(P)$ counts the vertices and $f_1(P)$ counts the edges of $P$. The sequence $(f_{0}(P),f_1(P),\ldots,f_{d}(P))$ is called the \textit{$f$-vector} of $P$, and the polynomial
    \[ f_P(t) = \sum_{i=0}^{d} f_i(P) t^{i} \]
    is called the \textit{$f$-polynomial} of $P$. The \textit{$h$-vector} $(h_0(P),\ldots,h_d(P))$ and \textit{$h$-polynomial} $h_P(t) = \sum_{i=0}^dh_i(P)t^i$ are defined by the relation
    \[ f_P(t) = h_P(t+1). \]
    It is well-known that when $P$ is a simple polytope, its $h$-vector satisfies the Dehn-Sommerville symmetry: $h_i(P) = h_{d-i}(P)$. When the $h$-polynomial is symmetric, it has a unique expansion in terms of binomials $t^i(1+t)^{d-2i}$ for $0\leq i \leq d/2$. This unique expansion gives the \textit{$\gamma$-vector} $(\gamma_0(P),\ldots,\gamma_{\lfloor\frac{d}{2}\rfloor}(P))$ and \textit{$\gamma$-polynomial} $\gamma_P(t) = \sum_{i = 0}^{\lfloor\frac{d}{2}\rfloor}\gamma_i(P)t^i$ defined by 
    \[ h_P(t) = \sum_{i=0}^{\lfloor\frac{d}{2}\rfloor}\gamma_i(P)t^i(1+t)^{d-2i} = (1+t)^d\gamma_P\left( \dfrac{t}{(1+t)^2} \right).\]

\subsection{Poset associahedra}\label{subsec:poset_ass}

    We start with some poset terminologies.

    \begin{definition}\label{def:poset_def}
        Let $(P,\preceq)$ be a finite poset, and $\tau,\sigma \subseteq P$ be subposets.

        \begin{itemize}
            \item $\tau$ is \textit{connected} if it is connected as an induced subgraph of the Hasse diagram of $P$.

            \item $\tau$ is \textit{convex} if whenever $x,z\in \tau$ and $y \in P$ such that $x\preceq y \preceq z$, then $y\in \tau$.

            \item $\tau$ is a \textit{tube} of $P$ if it is connected and convex. $\tau$ is a \textit{proper tube} if $1 < |\tau| < |P|$.

            \item $\tau$ and $\sigma$ are \textit{nested} if $\tau \subseteq \sigma$ or $\sigma \subseteq \tau$. $\tau$ and $\sigma$ are \textit{disjoint} if $\tau \cap \sigma = \emptyset$.

            \item We say $\sigma \prec \tau$ if $\sigma \cap \tau = \emptyset$, and there exists $x\in \sigma$ and $y\in \tau$ such that $x\preceq y$.

            \item A \textit{tubing} $T$ of $P$ is a set of proper tubes such that any pair of tubes in $T$ is either nested or disjoint, and there is no subset $\{\tau_1,\tau_2,\ldots,\tau_k\}\subseteq T$ such that $\tau_1 \prec \tau_2 \prec \ldots \prec \tau_k \prec \tau_1$. We will refer to the latter condition as the \textit{acyclic condition}.

            \item A tubing $T$ is \textit{maximal} if it is maximal under inclusion, i.e. $T$ is not a proper subset of any other tubing.
        \end{itemize}
    \end{definition}

    Figure \ref{fig:tubingEx} shows examples and non-examples of tubings of posets. Note that the right-most example in Figure \ref{subfig:tubingNonEx} is a non-example since it violates the acyclic condition. In particular, if we label the tubes from right to left as $\tau_1,\tau_2,\tau_3$, then we have $\tau_1 \prec \tau_2 \prec \tau_3 \prec \tau_1$.

    \begin{figure}[h!]
     \centering
        \begin{subfigure}[b]{0.45\textwidth}
            \centering
            \includegraphics[scale = 0.5]{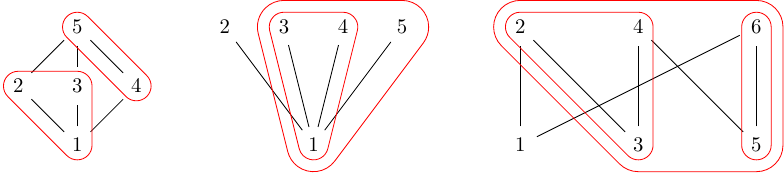}
            \caption{Examples}
            \label{subfig:tubingEx}
        \end{subfigure}
     \quad
        \begin{subfigure}[b]{0.45\textwidth}
            \centering
            \includegraphics[scale = 0.5]{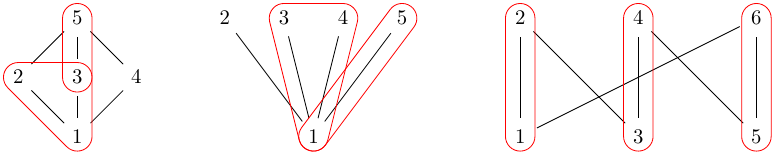}
            \caption{Non-examples}
            \label{subfig:tubingNonEx}
        \end{subfigure}

        \caption{Examples and non-examples of tubings of posets}
        \label{fig:tubingEx}
    \end{figure}

    \begin{definition}[{\cite[Theorem 1.2]{galashin2021poset}}]
        For a finite poset $P$, there exists a simple, convex polytope $\AAA(P)$ of dimension $|P|-2$ whose face lattice is isomorphic to the set of tubings ordered by reverse inclusion. The faces of $\AAA(P)$ correspond to tubings of $P$, and the vertices of $\AAA(P)$ correspond to maximal tubings of $P$. This polytope is called the \textbf{poset associahedron} of $P$.
    \end{definition}

    Examples of poset associahedra can be seen in Figure \ref{fig:posetAssEx}. In particular, if $P$ is a claw, i.e. $P$ consists of a unique minimal element $0$ and $n$ pairwise-incomparable elements as shown in Figure \ref{subfig:posetExPerm}, $\AAA(P)$ is a permutohedron. If $P$ is a chain, $\AAA(P)$ is an associahedron.

    \begin{figure}[h!]
     \centering
        \begin{subfigure}[c]{0.45\textwidth}
            \centering
            \includegraphics[scale = 0.3]{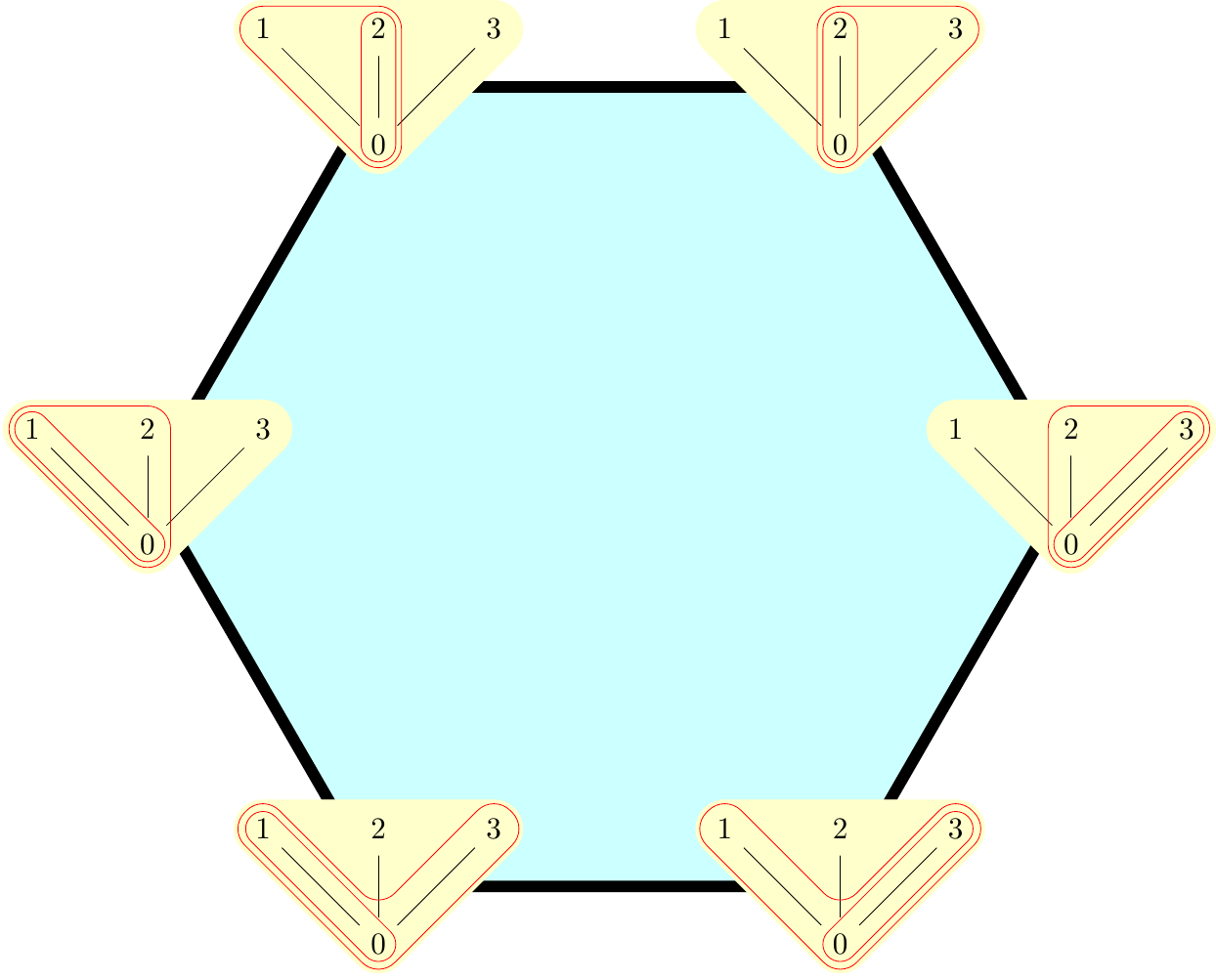}
            \caption{Permutohedron}
            \label{subfig:posetExPerm}
        \end{subfigure}
     \quad
        \begin{subfigure}[c]{0.45\textwidth}
            \centering
            \includegraphics[scale = 0.3]{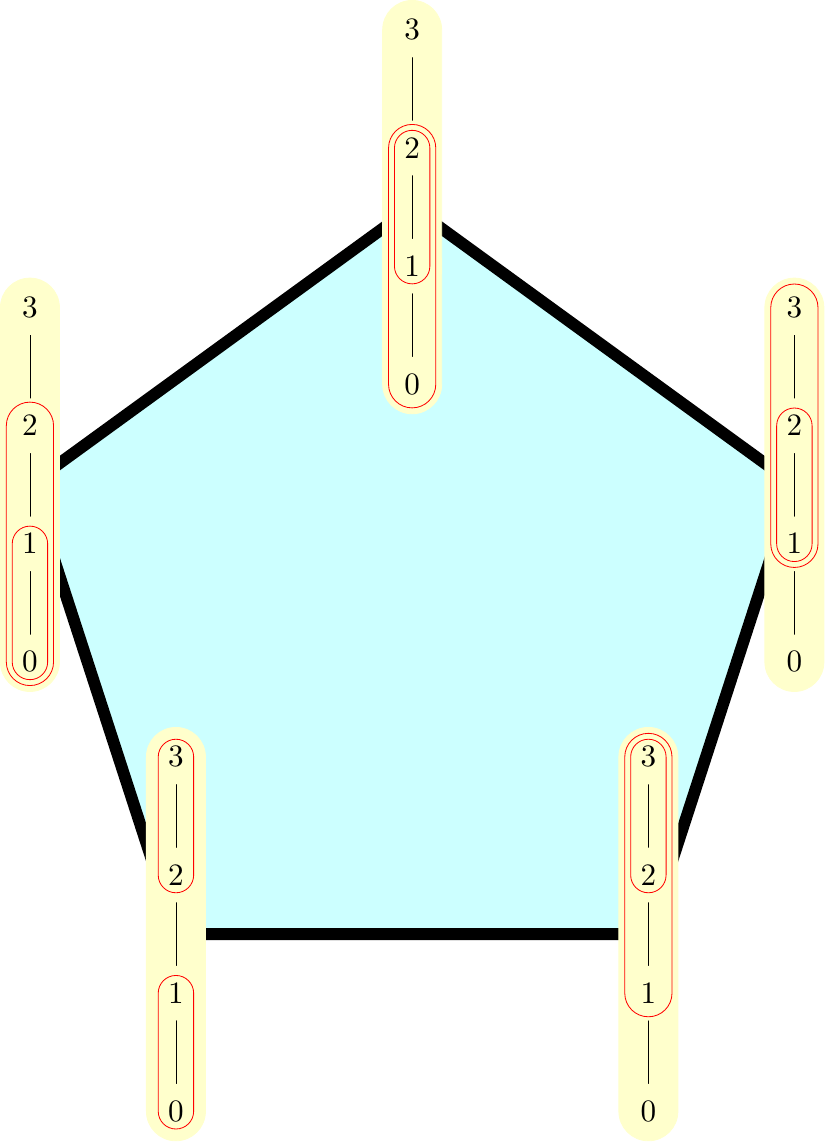}
            \caption{Associahedron}
            \label{subfig:posetExAss}
        \end{subfigure}

        \caption{Permutohedron and associahedron as poset associahedra}
        \label{fig:posetAssEx}
    \end{figure}

\subsection{Graph associahedra}\label{subsec:graph_ass}

    Graph associahedra are generalized permutohedra arising as special cases of nestohedra. We refer the readers to \cite{postnikov2006faces} for a comprehensive study of face numbers of generalized permutohedra and nestohedra.

    \begin{definition}\label{def:graph_def}
        Let $G = (V,E)$ be a connected graph, and $\tau,\sigma\subseteq V$ be subsets of vertices.

        \begin{itemize}
            \item $\tau$ is a \textit{tube} of $G$ if $\tau \neq V$ and it induces a connected subgraph of $G$.
            
            \item $\tau$ and $\sigma$ are \textit{nested} if $\tau \subseteq \sigma$ or $\sigma \subseteq \tau$. $\tau$ and $\sigma$ are \textit{disjoint} if $\tau \cap \sigma = \emptyset$.

            \item $\tau$ and $\sigma$ are \textit{compatible} if they are nested or they are disjoint and $\tau \cup \sigma$ is not a tube.

            \item A \textit{tubing} $T$ of $G$ is a set of pairwise compatible tubes.

            \item A tubing $T$ is \textit{maximal} if it is maximal by inclusion, i.e. $T$ is not a proper subset of any other tubing.
        \end{itemize}
    \end{definition}

    Figure \ref{fig:graphTubingEx} shows examples and non-examples of tubings of graphs. Note that the left-most example in Figure \ref{subfig:graphTubingNonEx} is a non-example since the tubes $\{1\}$ and $\{4\}$ are disjoint yet their union $\{1,4\}$ is still a tube. The same reason applies for the right-most example.

    \begin{figure}[h!]
     \centering
        \begin{subfigure}[b]{0.45\textwidth}
            \centering
            \includegraphics[scale = 0.5]{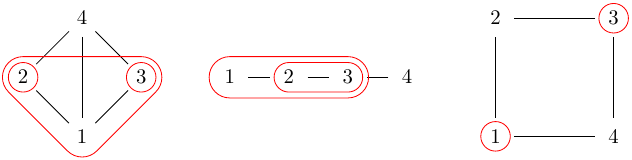}
            \caption{Examples}
            \label{subfig:graphTubingEx}
        \end{subfigure}
     \quad
        \begin{subfigure}[b]{0.45\textwidth}
            \centering
            \includegraphics[scale = 0.5]{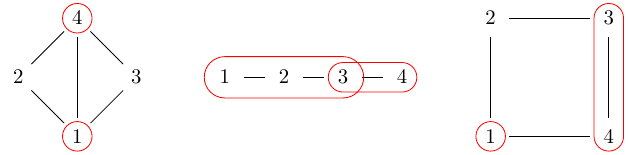}
            \caption{Non-examples}
            \label{subfig:graphTubingNonEx}
        \end{subfigure}

        \caption{Examples and non-examples of tubings of graphs}
        \label{fig:graphTubingEx}
    \end{figure}

    \begin{definition}
        For a connected graph $G = (V,E)$, the \textbf{graph associahedron} of $G$ is a simple, convex polytope $\Ass(G)$ of dimension $|V|-1$ whose face lattice is isomorphic to the set of tubings ordered by reverse inclusion. The faces of $\Ass(G)$ correspond to tubings of $G$, and the vertices of $\Ass(G)$ correspond to maximal tubings of $G$.
    \end{definition}

    Examples of graph associahedra can be seen in Figure \ref{fig:graphAssEx}. In particular, if $G$ is a complete graph, $\Ass(G)$ is a permutohedron. If $G$ is a path graph, $\Ass(G)$ is an associahedron.

    \begin{figure}[h!]
     \centering
        \begin{subfigure}[c]{0.45\textwidth}
            \centering
            \includegraphics[scale = 0.3]{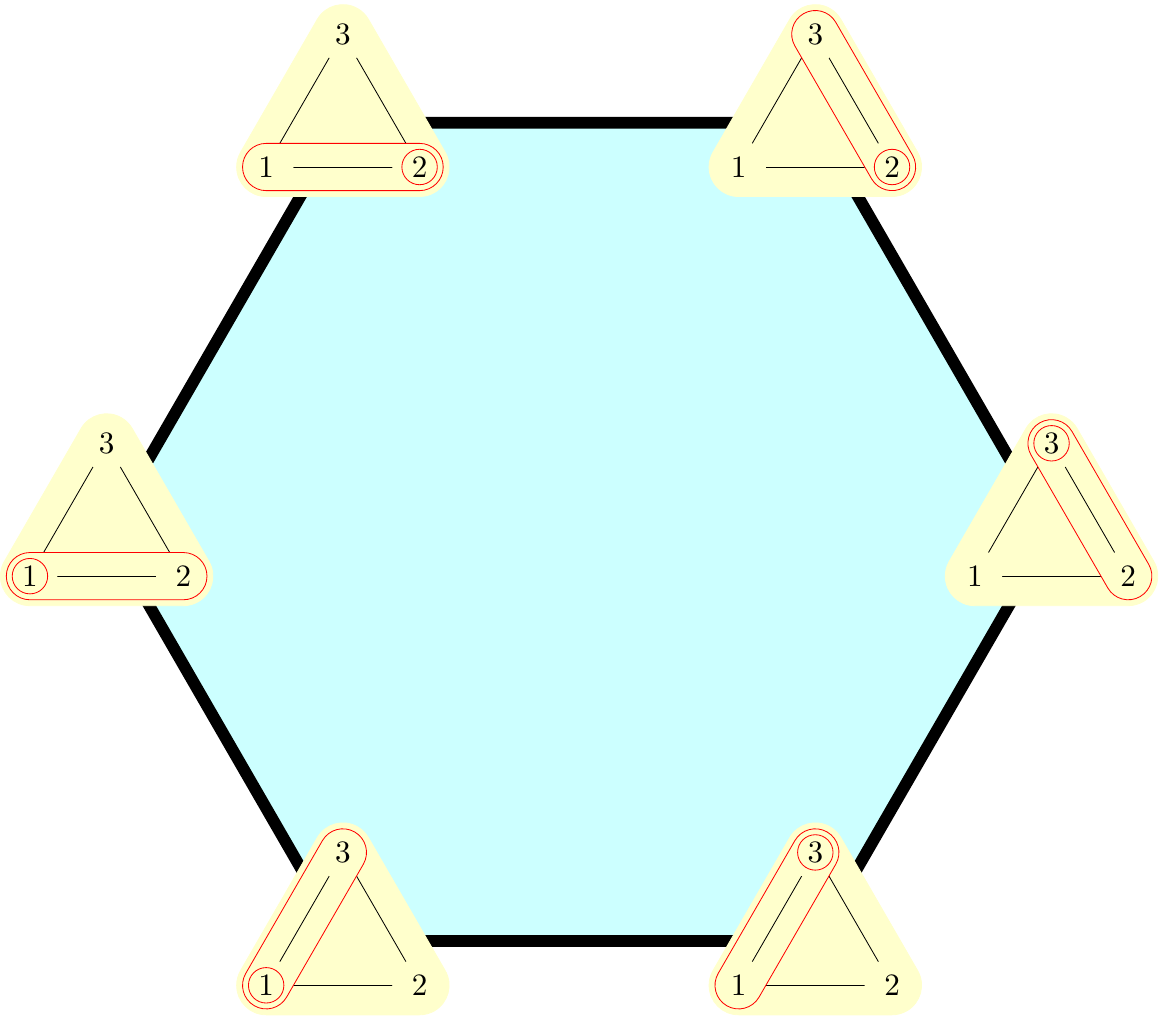}
            \caption{Permutohedron}
            \label{subfig:graphExPerm}
        \end{subfigure}
     \quad
        \begin{subfigure}[c]{0.45\textwidth}
            \centering
            \includegraphics[scale = 0.3]{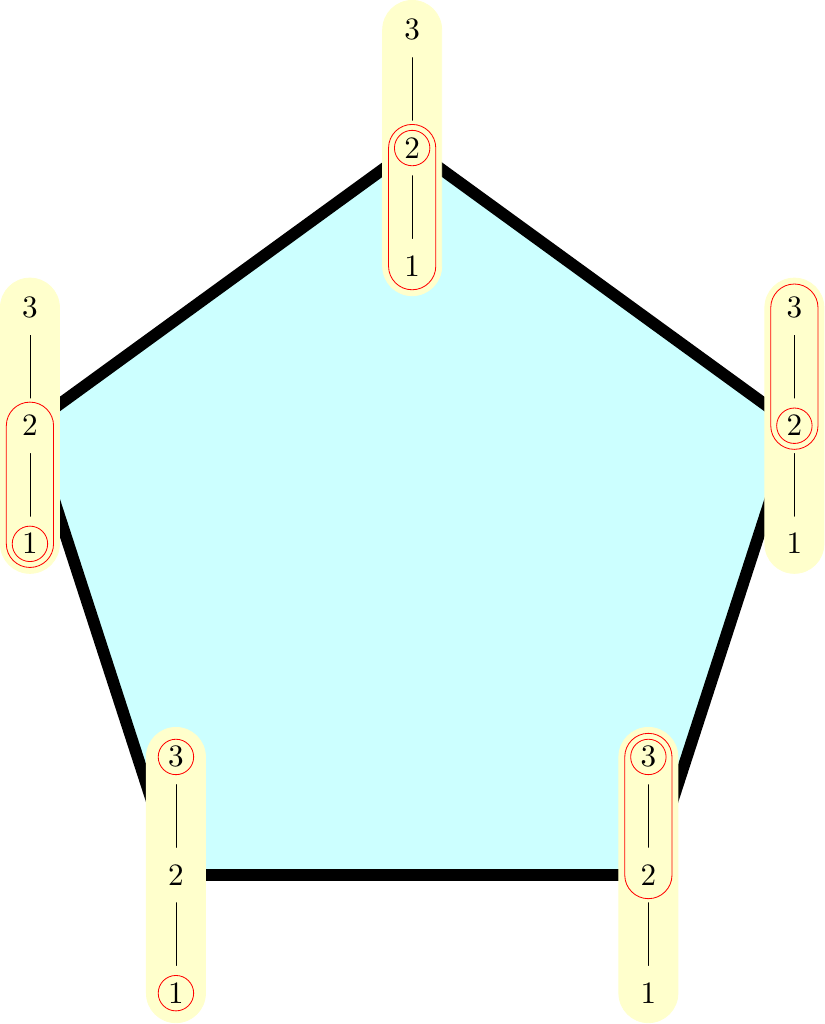}
            \caption{Associahedron}
            \label{subfig:graphExAss}
        \end{subfigure}

        \caption{Permutohedron and associahedron as graph associahedra}
        \label{fig:graphAssEx}
    \end{figure}

    Every maximal tubing of $G$ can be associated with a \textit{$\BB$-tree}. Recall that a \textit{rooted tree} is a tree with a distinguished node, called its \textit{root}. One can view a rooted tree $T$ as a partial order on its nodes in which $i <_T j$ if $j$ lies on the unique path from $i$ to the root. For a node $i$ in a rooted tree $T$, let $T_{\leq i} = \{j~|~j \leq_T i\}$ be the set of all descendants of $i$. Note that $i \in T_{\leq i}$. Nodes $i$ and $j$ in a rooted tree are called \textit{incomparable} if neither $i$ is a descendant of $j$, nor $j$ is a descendant of $i$. A \textit{descent} of $T$ is an edge $(i,j) \in E$ such that $i<j$ and $j <_T i$. We denote $\des (T)$ the number of descents in $T$.

    \begin{definition}\label{def:B-tree}
        For a maximal tubing $\BB$ of a graph $G = ([n],E)$, its \textit{$\BB$-tree} is a rooted tree $T$ on the node set $[n]$ such that
        \begin{itemize}
            \item For any $i\in[n]$ such that $i$ is not the root, one has $T_{\leq i}\in \BB$.
            \item For $k\geq 2$ incomparable nodes $i_1,\dots,i_k\in[n]$, one has $\bigcup_{j=1}^k T_{\leq i_j} \not\in \BB$.
        \end{itemize}
    \end{definition}

    Figure \ref{fig:Btree} shows three $\BB$-trees corresponding to three maximal tubings of a path graph. It is clear that $\BB$-trees of the same graph are not necessarily isomorphic.

    \begin{figure}[h!]
        \centering
        \includegraphics{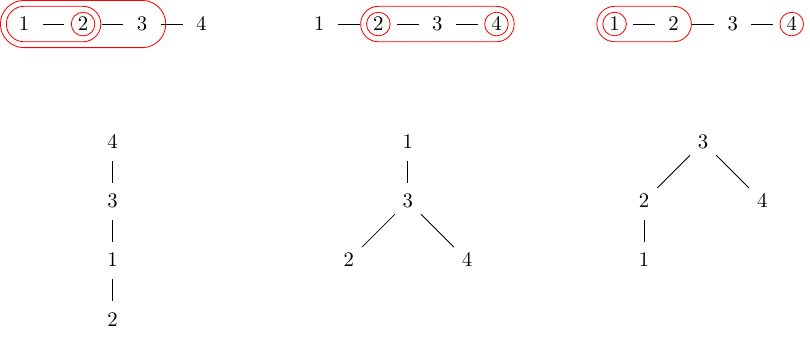}
        \caption{Maximal tubings of a path graph and their corresponding $\BB$-trees}
        \label{fig:Btree}
    \end{figure}

    The $h$-polynomial of $\Ass(G)$ is counted by the descents of the $\BB$-trees.

    \begin{thm}[{\cite[Corollary 8.4]{postnikov2006faces}}]\label{thm:b-tree-h-vector}
        For a connected graph $G$, the $h$-polynomial of $\Ass(G)$ is given by
        \[ h_{\Ass(G)}(t) = \sum_{T} t^{\des(T)}, \]
        where the sum is over all $\BB$-trees $T$.
    \end{thm}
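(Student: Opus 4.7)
The plan is to apply the standard \emph{generic linear functional method} for computing the $h$-vector of a simple polytope. Recall that if $P$ is a simple $d$-polytope and $\phi$ is a generic linear functional on its ambient space, then orienting every edge of $P$ from the endpoint with smaller $\phi$-value to the one with larger $\phi$-value yields
\[ h_i(P) \;=\; \#\{\, v \in V(P) : \mathrm{indeg}_\phi(v) = i \,\}, \]
where $\mathrm{indeg}_\phi(v)$ denotes the number of edges at $v$ directed toward $v$. It therefore suffices to produce a generic $\phi$ such that at each vertex $v_\BB$ of $\Ass(G)$ with $\BB$-tree $T$, the indegree equals $\des(T)$.

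I would realize $\Ass(G)$ as the Minkowski sum $\Ass(G) = \sum_{\tau} \Delta_\tau \subset \RR^V$, where $\tau$ ranges over the tubes of $G$ and $\Delta_\tau = \operatorname{conv}\{e_i : i \in \tau\}$. Under this realization, the vertex corresponding to a maximal tubing $\BB$ with $\BB$-tree $T$ is $v_\BB = \sum_\tau e_{r_T(\tau)}$, where $r_T(\tau)$ is the root of the subtree of $T$ induced on $\tau$, i.e., the unique node of $\tau$ that is a $T$-ancestor of every other node of $\tau$. I would then take $\phi(x) = -\sum_{i \in V} i \cdot x_i$, possibly after a small generic perturbation of the coefficients so that $\phi$ separates all vertices, giving $\phi(v_\BB) = -\sum_\tau r_T(\tau)$ (viewing $r_T(\tau)$ as its integer label).

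The edges of $\Ass(G)$ incident to $v_\BB$ correspond to \emph{flips} indexed by the non-root nodes of $T$: the flip at a non-root $m$ removes the tube $T_{\leq m}$ and replaces it by the unique compatible tube, yielding a new maximal tubing $\BB'$ with $\BB'$-tree $T'$. The key geometric identity is
\[ v_{\BB'} - v_\BB \;=\; c_m \, (e_m - e_{p(m)}), \]
where $p(m)$ is the parent of $m$ in $T$ and $c_m > 0$ counts the tubes $\tau$ whose induced root changes from $p(m)$ in $T$ to $m$ in $T'$. Consequently $\phi(v_{\BB'}) - \phi(v_\BB) = c_m(p(m) - m)$ has the same sign as $p(m) - m$, so the flip at $m$ is incoming at $v_\BB$ if and only if $p(m) < m$, equivalently, the tree edge from $p(m)$ to $m$ is a descent in the sense of the definition. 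Therefore $\mathrm{indeg}_\phi(v_\BB) = \#\{\,\text{non-root } m : p(m) < m\,\} = \des(T)$, and the generic-functional formula yields the claim.

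The main obstacle is establishing the edge-direction identity $v_{\BB'} - v_\BB = c_m(e_m - e_{p(m)})$. A flip at $m$ can reorganize $T$ nontrivially --- far beyond a mere swap of $m$ and $p(m)$, as $m$ may ``leap'' several levels in the tree --- so many tubes can simultaneously change their induced roots. The content of the identity is that all these changes align in the single direction $e_m - e_{p(m)}$: each affected tube $\tau$ sees its $r$-value move precisely from $p(m)$ to $m$, and never to any third node. This requires an explicit description of how the flip transforms $T$ into $T'$ --- essentially, $m$ takes the position of $p(m)$, and the subtree that used to be rooted at $p(m)$ is re-rooted at $m$ using the local graph structure dictated by which children of $p(m)$ remain connected to $m$ in $G$ --- combined with a tube-by-tube comparison of $r_T(\tau)$ and $r_{T'}(\tau)$, using that $r_T(\tau)$ must be a $T$-ancestor of every node of $\tau$.
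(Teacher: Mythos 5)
The paper does not actually prove this statement: it is imported verbatim from Postnikov--Reiner--Williams \cite{postnikov2006faces}, so there is no internal proof to compare against. Your outline is essentially the standard argument from that source --- realize $\Ass(G)$ as the Minkowski sum $\sum_\tau \Delta_\tau$, identify the vertex of a maximal tubing $\BB$ with $\sum_\tau e_{r_T(\tau)}$, and compute the $h$-vector as the in-degree distribution of the $1$-skeleton oriented by a linear functional that orders the labels. The strategy is correct, and modulo the lemma you yourself flag, the proof is complete.

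Two remarks on that remaining lemma. First, the identity $v_{\BB'}-v_\BB = c_m\,(e_m - e_{p(m)})$ is true, but the route you propose (an explicit description of how the flip rebuilds $T'$ from $T$, then a tube-by-tube comparison of roots) is the hard way, and your sketch of the flip is slightly off: it is not only the subtree at $p(m)$ that gets reorganized, since children of $m$ can be re-attached under $p(m)$. For the path $1\!-\!2\!-\!3\!-\!4$ and the tubing $\{\{2\},\{2,3\},\{1,2,3\}\}$ (tree order $2<_T3<_T1<_T4$), flipping at $m=3$ gives $T'_{\le 1}=\{1,2\}$, not $\{1\}$: node $2$, formerly a child of $m$, moves under $p(m)=1$. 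A cleaner argument avoids describing $T'$ altogether: the normal cone of $v_\BB$ is $\{\psi : \psi_i\le\psi_j\ \text{whenever}\ i\le_T j\}$, whose facets lie on the hyperplanes $\psi_m=\psi_{p(m)}$ for non-root $m$; since the wall shared with the adjacent cone spans the entire hyperplane $\{\psi_m=\psi_{p(m)}\}$, the edge $v_{\BB'}-v_\BB$ is forced to lie in $\RR(e_m-e_{p(m)})$, and the sign of the scalar is determined by which cone's interior consists of functionals maximized at which vertex. Second, a caveat on the statistic you land on: you count non-root nodes $m$ with $p(m)<m$, i.e.\ tree edges of $T$ whose child label exceeds its parent label. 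That is the correct statistic (and the one in \cite{postnikov2006faces}), but note that the paper's printed definition of a descent --- ``an edge $(i,j)\in E$ such that $i<j$ and $j<_T i$,'' with $E$ the edge set of $G$ --- literally counts graph edges, which is a different and wrong number: for $G=K_3$ and the tree with $3<_T2<_T1$ it gives $3$, exceeding $\dim \Ass(K_3)=2$. Your reading is the intended one, but you should say explicitly that you are counting edges of $T$, not of $G$.
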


    \subsubsection{Graph associahedra and poset associahedra}\label{subsubsec:graph_ass_poset_ass}

    Despite the similarity between graph associahedra and poset associahedra, neither of them is a subset of the other. Nevertheless, when the Hasse diagram of a poset $P$ is a tree, let $G_P$ be the line graph of the Hasse diagram of $P$, then $\AAA(P)$ is isomorphic to $\Ass(G_P)$. For instance, if $P$ is a claw, then $G_P$ is a complete graph, and $\AAA(P)$ and $\Ass(G_P)$ are both permutohedra. If $P$ is a chain, then $G_P$ is a path graph, and $\AAA(P)$ and $\Ass(G_P)$ are both associahedra. One can see a clear correspondence betweem tubings of $P$ and $G_P$ in Figures \ref{fig:posetAssEx} and \ref{fig:graphAssEx}.

    \subsection{Stack-sorting}\label{subsec:stack-sorting}

    First introduced by Knuth in \cite{knuth1973art}, the \textit{stack-sorting algorithm} led to the study of \textit{pattern avoidance} in permutations. In \cite{west1990permutations}, West defined a deterministic version of Knuth's stack-sorting algorithm, which we call the \textit{stack-sorting map} and denote by $s$. The stack-sorting map is defined as follows.

    \begin{definition}[Stack-sorting]\label{def:stack-sorting}
        Given a permutation $\pi \in \SSS_n$, $s(\pi)$ is obtained through the following procedure. Iterate through the entries of $\pi$. In each iteration,
        \begin{itemize}
            \item if the stack is empty or the next entry is smaller than the entry at the top of the stack, push the next entry to the top of the stack;
            \item else, pop the entry at the top of the stack to the end of the output permutation.
        \end{itemize}
    \end{definition}

    Figure \ref{fig:ss-ex} illustrates the stack-sorting process on $\pi = 3142$.

    \begin{figure}[h!]
        \centering
        \includegraphics[scale=0.5]{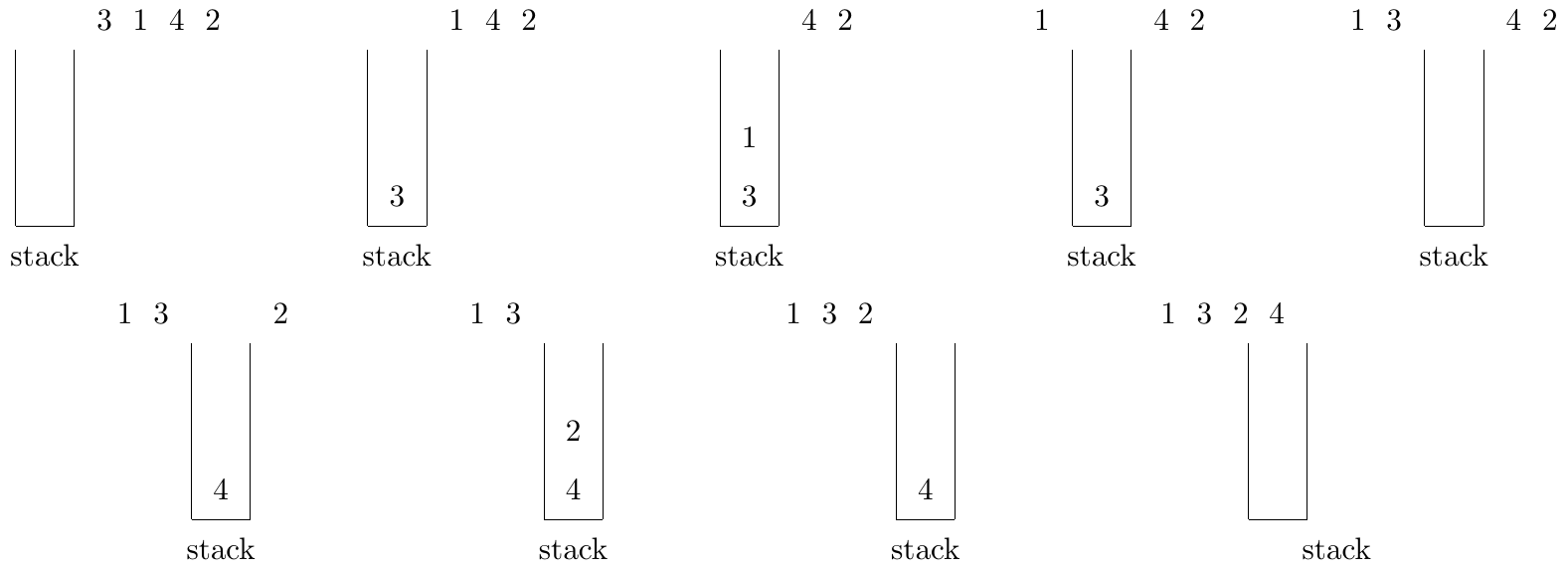}
        \caption{Example of $s(3142)$}
        \label{fig:ss-ex}
    \end{figure}

    Another way to define $s$ is by \textit{decreasing binary trees}. Recall that a \textit{binary tree} is a rooted tree in which each node has at most 2 children, usually called the left and right child. A decreasing binary tree is a binary tree whose $n$ nodes have been labeled bijectively with the numbers $\{1,2,\ldots,n\}$, such that the number in each node is larger than the numbers in its children. 

    There is a natural bijection between decreasing binary trees of size $n$ and permutations in $\SSS_n$ by \textit{inorder reading}. To read a binary tree in inorder, first we read the left subtree in inorder. Then we read the root, and finally we read the right subtree in inorder. Note that this is a recursive definition. For a decreasing binary tree $T$, we denote by $\mathcal{I}(T)$ the permutation obtained by reading $T$ in inorder. Recall that a \textit{descent} of a permutation $w$ is an index $i$ such that $w_i > w_{i+1}$. Notice that for every decreasing binary tree $T$, the descents of $\mathcal{I}(T)$ are in one-to-one correspondence with the right edges of $T$.

    Another order to read a binary tree is \textit{postorder}. To read a binary tree in postorder, first we read the left subtree in postorder. Then we read the right subtree in order before we read the root. This is also a recursive definition. For a decreasing binary tree $T$, we denote by $\mathcal{P}(T)$ the permutation obtain by reading $T$ in postorder.

    \begin{figure}[h!]
        \centering
        \includegraphics[scale=1]{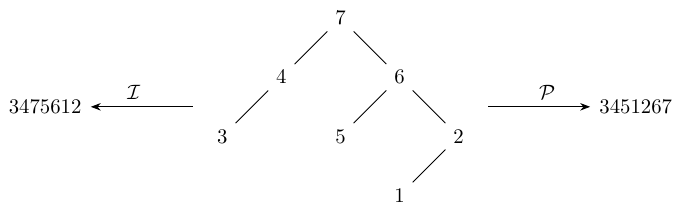}
        \caption{Reading a binary tree in inorder and postorder }
        \label{fig:reading-order-ex}
    \end{figure}

    Figure \ref{fig:reading-order-ex} shows two permutations obtained by reading a binary tree in inorder and postorder. The reading orders of binary trees give an alternate definition of stack-sorting.

    \begin{prop}[{\cite[Corollary 8.26]{bona2022combinatorics}}]\label{prop:stacksorting-postorder}
        For any $\pi \in \SSS_n$, one has
        \[ s(\pi) = \mathcal{P}(\mathcal{I}^{-1}(\pi)). \]
    \end{prop}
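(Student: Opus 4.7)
The plan is to prove Proposition \ref{prop:stacksorting-postorder} by induction on $n$. The base case $n \leq 1$ is trivial since both maps produce the unique permutation. For the inductive step, let $\pi \in \SSS_n$ with $n \geq 2$, and let $k$ be the position of the maximum entry $n$ in $\pi$, so that we can write $\pi = \alpha \cdot n \cdot \beta$, where $\alpha = \pi_1 \cdots \pi_{k-1}$ and $\beta = \pi_{k+1} \cdots \pi_n$.

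The heart of the argument is the following recursive description of stack-sorting:
\[
s(\pi) = s(\alpha) \cdot s(\beta) \cdot n.
\]
To see this, I would trace the algorithm explicitly. While processing $\alpha$, the stack is unaware of what comes later, so its state and output behave exactly as in $s(\alpha)$ applied on its own, except that the final ``flushing'' of leftover stack entries is postponed. When we next read the entry $n$, the algorithm compares $n$ with the current top of the stack; since every entry on the stack is less than $n$, all remaining entries are popped in order to the output before $n$ is finally pushed. The cumulative output at this moment is therefore precisely $s(\alpha)$. After $n$ is pushed, it sits at the bottom of the stack and acts as a permanent barrier: every subsequent entry (from $\beta$) is smaller than $n$, so $n$ is never popped until the very end. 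Consequently, the portion of the algorithm that processes $\beta$ on top of the $n$-barrier produces the same output as $s(\beta)$. Finally, $n$ is popped last. This establishes the recursion.

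On the tree side, the decreasing binary tree $T = \mathcal{I}^{-1}(\pi)$ is built exactly so that the root is the maximum $n$, with left subtree $\mathcal{I}^{-1}(\alpha)$ and right subtree $\mathcal{I}^{-1}(\beta)$, since inorder reading recovers $\pi = \alpha \cdot n \cdot \beta$. By the definition of postorder reading,
\[
\mathcal{P}(T) = \mathcal{P}(\mathcal{I}^{-1}(\alpha)) \cdot \mathcal{P}(\mathcal{I}^{-1}(\beta)) \cdot n.
\]
Applying the inductive hypothesis to $\alpha$ and $\beta$ (which have strictly fewer entries) gives $\mathcal{P}(\mathcal{I}^{-1}(\alpha)) = s(\alpha)$ and $\mathcal{P}(\mathcal{I}^{-1}(\beta)) = s(\beta)$, and combining with the recursion for $s$ yields $\mathcal{P}(\mathcal{I}^{-1}(\pi)) = s(\pi)$.

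The main obstacle is the careful bookkeeping in the key recursion $s(\pi) = s(\alpha) \cdot s(\beta) \cdot n$; in particular, one must verify that the stack configuration after processing all of $\alpha$ is identical in the two scenarios (stack-sorting $\alpha$ alone versus the first phase of stack-sorting $\pi$), and that the presence of $n$ at the bottom of the stack during the processing of $\beta$ has no influence whatsoever on the comparisons made with the entries of $\beta$. Both facts follow from the simple observation that the stack-sorting algorithm only ever compares the next input entry with the current top of the stack, but they deserve to be spelled out carefully.
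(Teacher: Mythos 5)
Your proof is correct. The paper itself gives no proof of this proposition—it is cited directly from B\'ona's textbook—but your argument (the recursion $s(\pi) = s(\alpha)\cdot s(\beta)\cdot n$ for $\pi = \alpha\cdot n\cdot\beta$, matched against the recursive structure of the decreasing binary tree, whose root is $n$ with left subtree $\mathcal{I}^{-1}(\alpha)$ and right subtree $\mathcal{I}^{-1}(\beta)$) is exactly the standard proof of this fact. The only point worth tightening is that $\alpha$ and $\beta$ are not elements of some $\SSS_m$ but merely words with distinct entries, so the induction should formally be carried out over such words (or one should observe that both $s$ and $\mathcal{I}^{-1}$ commute with standardization); with that adjustment the argument is complete.
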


    For example, we encourage the the readers to check that $s(3475612) = 3451267$, which matches the example in Figure \ref{fig:reading-order-ex}.

    \subsection{Catalan convolution}\label{subsec:catalan-convolution}

    The \textit{Catalan numbers}, $C_n = \frac{1}{n+1}\binom{2n}{n}$, are one of the most well-known sequences in combinatorics. Among hundreds of objects counted by the Catalan numbers, three well-known objects are binary trees, \textit{stack-sortable permutations}, and \textit{Dyck paths}.

    A permutation $w\in \SSS_n$ is stack-sortable if $s(w) = 12\ldots n$. Given a binary tree $T$, there is only one way to label the nodes of $T$ such that the postorder reading permutation is the identity permutation. This gives a natural bijection between stack-sortable permutations and binary trees.

    A Dyck path of length $2n$ is a path from $(0,0)$ to $(n,n)$ with steps $(1,0)$ (up steps) and $(0,1)$ (right steps) that never goes below the diagonal line. There is a bijection between Dyck paths of length $2n$ and binary trees with $n$ nodes as follows. For a binary tree $T$:
    \begin{enumerate}
        \item Create a binary tree $T'$ by adding one child to every node in $T$ that has exactly one child, and adding two children to every node in $T$ that has no child. $T'$ is a \textit{full binary tree}, i.e. a binary tree in which each node has zero or two children, and $T'$ has $2n+1$ nodes. The added nodes are the \textit{leaves} of $T'$, and the original nodes in $T$ are the \textit{internal nodes} of $T'$.
        \item Read $T'$ in \textit{preorder}: first read the root, then read the left subtree in preorder before reading the right subtree in preorder. When we read an internal node, add an up step to the Dyck path. When we read a leaf, add a right step. Note that we always ignore the final leaf since there are $2n+1$ nodes in $T'$ but only $2n$ steps in the Dyck path.
    \end{enumerate}
    Recall that a valley in a Dyck path is a rightstep followed by an upstep. Observe that in the above bijection, the number of right edges in $T$ is the same as the number of valleys in the corresponding Dyck path. For example, in Figure \ref{fig:bt-dyck-bij}, the binary tree has 2 right edges and the corresponding Dyck path has 2 valleys.

    \begin{figure}[h!]
        \centering
        \includegraphics[scale=1]{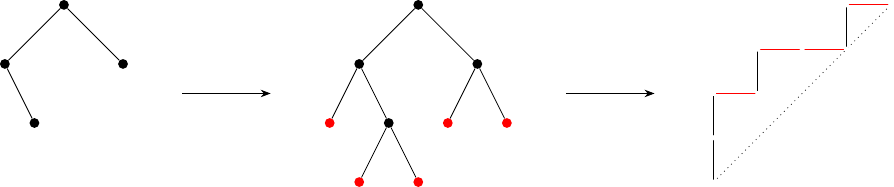}
        \caption{Example of the bijection between binary trees and Dyck paths}
        \label{fig:bt-dyck-bij}
    \end{figure}

    The Catalan convolution is defined as follows.

    \begin{definition}\label{def:cat-con}
        For $n,k \in \ZZ_{\geq 0}$, the $k$th Catalan convolution is
        \[ C_n^{(k)} = \sum_{\substack{a_1+a_2+\ldots+a_{k+1} = n \\ a_1,a_2,\ldots,a_{k+1} \in \ZZ_{\geq 0}}} C_{a_1}C_{a_2}\ldots C_{a_{k+1}}. \]
    \end{definition}

    The explicit formula for $C_n^{(k)}$ is
    \[ C_n^{(k)} = \dfrac{k+1}{n+k+1}\binom{2n+k}{n}. \]
    By definition, $C_n^{(0)} = C_n$ and $C_n^{(1)} = C_{n+1}$. Also, for all $k$, we have $C_0^{(k)} = 1$ and $C_1^{(k)} = k+1$. We will use the following combinatorial interpretation of Catalan convolution: $C_n^{(k)}$ counts the number of Dyck paths of length $2(n+k)$ that start with at least $k$ up steps. To see that this is the correct interpretation, recall that a Dyck path of length $2(n+k)$ starting with at least $k$ up steps corresponds to a parenthesization of $n+k$ pairs of parentheses starting with at least $k$ open brackets. We mark these open brackets. For each marked open bracket, we mark the close bracket that matches it. This gives $k$ marked close brackets. In the Dyck path, we mark the steps corresponding to the marked brackets. Thus, the Dyck path has the following form:
    \[ \underbrace{\textcolor{blue}{U},\ldots,\textcolor{blue}{U},\textcolor{blue}{U}}_{k~\text{up steps}}, D_1, \textcolor{blue}{R}, D_2, \textcolor{blue}{R}, \ldots, D_k, \textcolor{blue}{R}, D_{k+1}, \]
    where the marked steps are colored blue. Observe that the steps in $D_1$ correspond to the brackets inside the inner-most pair of marked brackets. These brackets have to form a parenthesization. Thus, $D_1$ is a Dyck path of length $2a_1 \geq 0$. Similarly, each $D_i$ is a Dyck path of length $2a_i \geq 0$. Note that some $D_i$ may have length zero, so we may have consecutive marked right steps.

    \begin{definition}\label{def:D_n,k}
        For $n,k \geq 0$, we define $\DDD_{n,k}$ to be the set of all Dyck paths of length $2(n+k)$ that start with $k$ up steps. For each Dyck path $D \in \DDD_{n,k}$, let $c(D)$ be the vector where $c_i$ is the length of the $i$th block of consecutive marked right steps.
    \end{definition}
    
    Thus, $c(D)$ is a composition of $k$ and also depends on $k$. For example, Figures \ref{subfig:c(D)-ex1} and \ref{subfig:c(D)-ex2} both show the same Dyck path $D$. However, in Figure \ref{subfig:c(D)-ex1}, we view $D$ as an element of $\DDD_{5,4}$, so $c(D) = (1,2,1)$, which is a composition of $4$. In Figure \ref{subfig:c(D)-ex2}, we view $D$ as an element of $\DDD_{6,3}$, so $c(D) = (2,1)$, which is a composition of $3$.

    \begin{figure}[h!]
     \centering
        \begin{subfigure}[c]{0.45\textwidth}
            \centering
            \includegraphics[scale = 0.7]{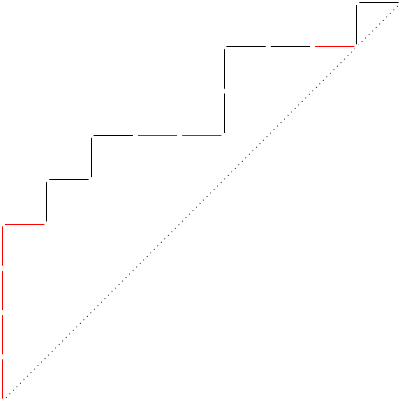}
            \caption{A Dyck path in $\DDD_{5,4}$}
            \label{subfig:c(D)-ex1}
        \end{subfigure}
     \quad
        \begin{subfigure}[c]{0.45\textwidth}
            \centering
            \includegraphics[scale = 0.7]{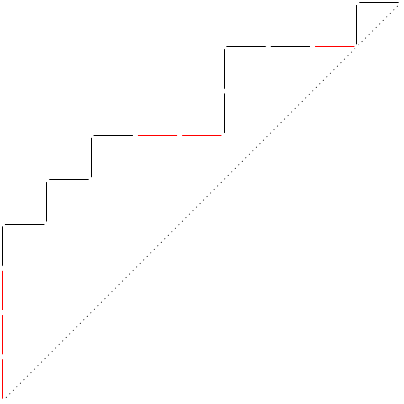}
            \caption{A Dyck path in $\DDD_{6,3}$}
            \label{subfig:c(D)-ex2}
        \end{subfigure}

        \caption{The same Dyck path but viewed as an element of two different sets}
        \label{fig:c(D)-ex}
    \end{figure}
    
    Given a permutation $w \in \SSS_k$ and a composition $c = (c_1,\ldots,c_\ell)$ of $k$, $c$ divides the indices $1,2,\ldots,k$ into $\ell$ blocks: the first block consists of the indices $1,2,\ldots,c_1$; the second block consists of the indices $c_1+1,c_1+2,\ldots,c_1+c_2$; and so on. We define the \textit{descent of $w$ with respect to $c$} as
    \[ \des_c(w) = |\{i~|~\text{$i$ and $i+1$ are in the same block divided by $c$ and}~ w_i > w_{i+1}\}|. \]
    For example, $\des_{(2,2)}(4312) = 1$ because even though $w_2 > w_3$, $2$ and $3$ are not in the same block divided by $(2,2)$, so this descent does not count.
    
    \begin{definition}\label{def:P_n,k}
        For $n,k\geq 0$, we define
        \[ \PPP_{n,k} = \{(w,D)~|~ w \in \SSS_k, D\in \DDD_{n,k}\}. \]
        For each pair $(w,D) \in \PPP_{n,k}$, we define
        \[ \des(w,D) = \des_{c(D)}(w) + \#\text{valley in $D$}. \]
    \end{definition}

\section{Stack-sorting}\label{sec:stack-sorting}

    Recall that we defined $\SSS_{n,k} = \{w~|~w\in \SSS_{n+k}, w_i = i~\text{for all}~i>k\}$. In this section, we will construct a bijection between $s^{-1}(\SSS_{n,k})$ and $\PPP_{n,k}$ that preserves the number of descents.
    
    For $w\in s^{-1}(\SSS_{n,k})$, let $T = \mathcal{I}^{-1}(w)$ be the decreasing binary tree corresponding to $w$. Define the \textit{core tree} of $T$ to be the induced subtree of $T$ form by the nodes $k+1,\ldots,n+k$. Note that the core tree of $T$ is connected, and the nodes have to be labeled from $k+1$ to $n+k$ in postorder. A node in the core tree is \textit{marked} if it contains node $k+1$ in its right subtree. Let $a_1,\ldots,a_{\ell-1}$ be the marked nodes. Observe that since they all contain $k+1$ in their right subtree, they are totally ordered $k+1 = a_\ell <_T a_{\ell_1} <_T \ldots <_T a_1$. Recall that when reading $T$ in postorder, we obtain a permutation in $\SSS_{n,k}$. In particular, the nodes appearing before node $k+1$ in postorder are exactly the nodes $1,\ldots,k$. Thus, the nodes in $T_{\leq k+1}$ and in the left subtree of the marked nodes are exactly the nodes $1,\ldots,k$.

    Define the sequence $c(T)$ as follows: for $1\leq i \leq \ell$, $c_i$ equals the number of nodes in the left subtree of $a_i$ in $T$; in addition, $c_{\ell + 1}$ equals the number of nodes in the right subtree of $a_\ell = k+1$ in $T$. The nodes in the left subtrees of $a_i$'s and in the right subtree of $k+1$ are exactly the nodes $1,\ldots,k$, so $c(T)$ is a weak composition of $k$.

    For each marked node, we now remove its right edge. This divides the core tree of $T$ into $\ell$ disjoint trees $B_1,\ldots,B_\ell$, with $B_i$ containing $a_i$. Furthermore, in $B_i$, $a_i$ is a leaf, and $a_i$ is the left most node, i.e. the unique path from the root to $a_i$ consists of only left edges. We construct a sequence of Dyck paths $D_1,\ldots,D_\ell$ corresponding to $T$ as follows. For each $B_i$,
    \begin{enumerate}
        \item let $B_i'$ be $B_i \backslash \{a_i\}$;
        \item let $D_i'$ be the Dyck path corresponding to $B'_i$ by the bijection in Section \ref{subsec:catalan-convolution};
        \item let $D_i$ be $U,D_i',R$.
    \end{enumerate}
    Observe that each $D_i$ is a Dyck path that never returns to the diagonal. Furthermore, the total length of these Dyck paths is exactly $2n$ since there are $n$ nodes in the core tree of $T$. Now we are ready to state our bijection.

    \begin{definition}\label{def:ss-bij}
        Define the map
        \[ f_{n,k}: s^{-1}(\SSS_{n,k}) \rightarrow \PPP_{n,k} \]
        as follows. For $w\in s^{-1}(\SSS_{n,k})$, let $T = \mathcal{I}^{-1}(w)$. Let $D_1,\ldots,D_\ell$ be the sequence of Dyck paths corresponding to $T$, and let $c(T) = (c_1,\ldots,c_{\ell+1})$. We have $f_{n,k}(w) = (\omega, D)$, where
        \begin{itemize}
            \item $\omega$ is obtained by removing all numbers $k+1,\ldots,n$ in $w$, and
            \item $D$ has the form
            \[ \underbrace{U,\ldots,U}_{k~\text{up steps}},\underbrace{R,\ldots,R}_{c_{\ell+1}~\text{right steps}}, D_\ell, \underbrace{R,\ldots,R}_{c_\ell~\text{right steps}}, D_{\ell-1}, \ldots, \underbrace{R,\ldots,R}_{c_2~\text{right steps}}, D_1, \underbrace{R,\ldots,R}_{c_1~\text{right steps}}. \]
        \end{itemize}
        Note that another way to get $\omega$ is to read the nodes $1,\ldots,k$ in $T$ in inorder. Furthermore, $c(T)$ is a weak composition of $k$, and the total length of $D_1,\ldots,D_\ell$ is $2n$. Thus, $D$ is a Dyck path of length $2(n+k)$ starting with $k$ up steps, i.e. $D\in \DDD_{n,k}$. Therefore, $f_{n,k}(w)$ is indeed in $\PPP_{n,k}$ since $\omega\in \SSS_k$, and $D \in \DDD_{n,k}$.
    \end{definition}

    Let us show an example of this map. Figure \ref{fig:f_n,k-ex1} shows a binary tree $T$ with $\mathcal{I}(T) \in s^{-1}(\SSS_{11,6})$. The marked nodes of $T$ are colored red, i.e. $a_1 = 13$, $a_2 = 11$, $a_3 = 10$, and $a_4 = 7$. Thus, we have $c_1 = 2$ since there are two nodes in the left subtree of $a_1 = 13$. Similarly, $c_2 = 0$, $c_3 = 1$, $c_4 = 2$. Finally, $c_5 = 1$ since there is one node in the right subtree of $a_4 = 7$.

    \begin{figure}[h!]
        \centering
        \includegraphics[scale=0.7]{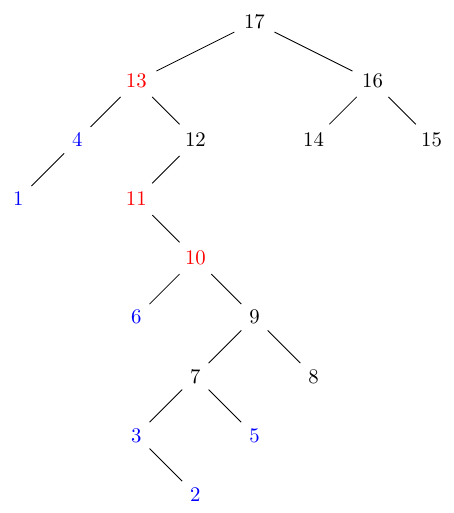}
        \caption{A binary $T$ with $\mathcal{I}(T)\in s^{-1}(\SSS_{11,6})$}
        \label{fig:f_n,k-ex1}
    \end{figure}

    Next, removing the right edges of $a_i$ for $1\leq i < 4$, we obtain four disjoint binary trees shown in Figure \ref{fig:f_n,k-ex2}. Figure \ref{fig:f_n,k-ex2} also shows the corresponding Dyck paths. Observe that these are Dyck paths that never return to the diagonal (until the last step).

    \begin{figure}[h!]
        \centering
        \includegraphics[scale=0.7]{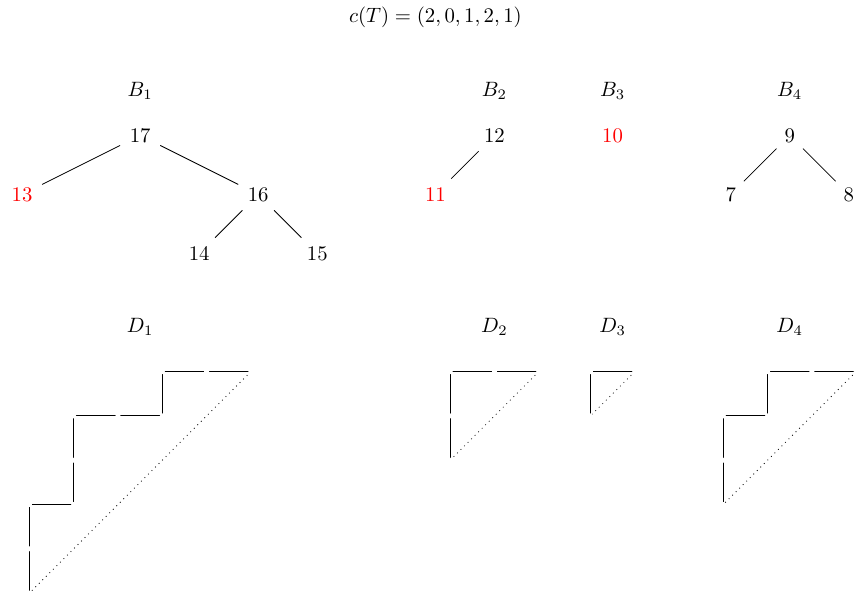}
        \caption{The sequence $c(T)$, the disjoint binary trees and the corresponding Dyck paths}
        \label{fig:f_n,k-ex2}
    \end{figure}

    Putting the Dyck paths and $c(T)$ together, we obtain the Dyck path in Figure \ref{fig:f_n,k-ex3}.

    \begin{figure}[h!]
        \centering
        \includegraphics[scale=0.7]{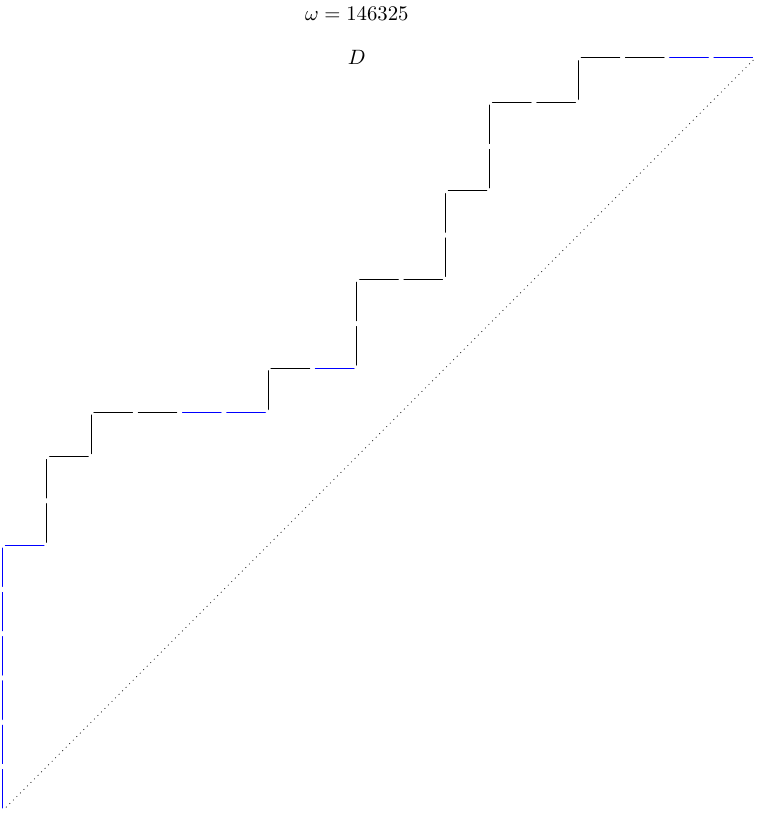}
        \caption{The pair $(\omega, D) = f_{11,6}(T)$}
        \label{fig:f_n,k-ex3}
    \end{figure}

    \begin{prop}\label{prop:ss-bij}
        The map $f_{n,k}$ above is a bijection.
    \end{prop}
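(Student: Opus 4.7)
The plan is to construct an explicit inverse $g_{n,k}: \PPP_{n,k} \to s^{-1}(\SSS_{n,k})$ and verify that $f_{n,k}$ and $g_{n,k}$ are two-sided inverses. Given $(\omega, D) \in \PPP_{n,k}$, I would reconstruct the corresponding decreasing binary tree $T$ in three stages. First, I would parse $D$ by stripping its initial $k$ up steps and identifying the $k$ marked right steps as precisely those that lower the running minimum level; these partition the remainder of $D$ into $\ell$ (non-empty) sub-Dyck-paths $D_\ell, D_{\ell-1}, \ldots, D_1$ separated by (possibly empty) blocks of marked right steps of sizes $c_{\ell+1}, c_\ell, \ldots, c_1$. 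Second, I would recover the core tree: for each $i$, write $D_i = U \cdot D_i' \cdot R$, apply the inverse of the Dyck-path-to-binary-tree bijection of Section~\ref{subsec:catalan-convolution} to $D_i'$ to obtain $B_i'$, attach a fresh node $a_i$ as its leftmost leaf to form $B_i$, and assemble these into a single core tree by declaring the root of $B_{i+1}$ to be the right child of $a_i$ for $1 \le i \le \ell - 1$; label this tree by its own postorder with labels $k+1, \ldots, n+k$. Third, I would split $\omega$ into consecutive blocks of sizes $c_1, \ldots, c_{\ell+1}$, use $\mathcal{I}^{-1}$ to convert each block into a decreasing binary tree, and attach the $i$-th tree as the left subtree of $a_i$ for $i \le \ell$ and the $(\ell+1)$-st tree as the right subtree of $a_\ell$. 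Then $g_{n,k}(\omega, D) := \mathcal{I}(T)$ where $T$ is the resulting decreasing binary tree.

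The parsing in the first step is unambiguous because each $D_i$ starts with an up step and returns to its starting level only at its final right step, never dipping below; hence marked right steps are exactly those that lower the running minimum. Given this, checking $g_{n,k} \circ f_{n,k} = \mathrm{id}$ is routine: each structural piece produced by $f_{n,k}$ (the marked nodes $a_i$, the components $B_i$, the composition $c(T)$, and the permutation $\omega$) is faithfully recovered by $g_{n,k}$, and the underlying bijections (binary tree to Dyck path, postorder labelling, and permutation to decreasing binary tree via $\mathcal{I}^{-1}$) are all invertible.

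The main technical step is the other direction $f_{n,k} \circ g_{n,k} = \mathrm{id}$, for which the principal obstacle is showing that the tree $T$ produced by $g_{n,k}$ actually lies in $s^{-1}(\SSS_{n,k})$; by Proposition~\ref{prop:stacksorting-postorder} this means $\mathcal{P}(T)$ must read $k+1, \ldots, n+k$ in its last $n$ positions. Since postorder of $T$ restricted to big nodes agrees with postorder of the core tree (attaching small subtrees does not alter the relative order of big nodes), and the core tree is labelled by its own postorder, it suffices to show that every small node precedes every big node in postorder of $T$. I would prove by downward induction on $i$ from $\ell$ to $1$ that the postorder of the subtree of $T$ rooted at $a_i$ consists of $c_i + c_{i+1} + \cdots + c_{\ell+1}$ small nodes followed by the big nodes in that subtree. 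The base case $i = \ell$ is immediate since $a_\ell$'s subtree contains only small nodes and $a_\ell$ itself. For the inductive step, the postorder at $a_i$ visits the $c_i$ small nodes of its left subtree, then the subtree rooted at $a_{i+1}$ (whose postorder by induction begins with $c_{i+1} + \cdots + c_{\ell+1}$ smalls) because $a_{i+1}$ is the leftmost leaf of $B_{i+1}$ in the core tree, then the remaining big nodes of $B_{i+1}$, and finally $a_i$. Applied at $i = 1$, and combined with the fact that $a_1$ is the leftmost leaf of $B_1$ (so $a_1$'s subtree in $T$ is visited in postorder of $T$ before any other node of $B_1$), this shows that all $k$ small nodes appear in postorder of $T$ before any big node. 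Once this postorder property is in place, applying $f_{n,k}$ to $T$ returns the original pair $(\omega, D)$ directly from the construction.
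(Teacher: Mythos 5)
Your proposal is correct and follows essentially the same route as the paper: both construct the inverse by parsing $D$ into the marked right steps and the sub-Dyck-paths $D_i$ (using the key fact that each $D_i$ never returns to its starting level), rebuilding the core tree with the $a_i$'s as leftmost leaves, and reattaching the small-node subtrees from the blocks of $\omega$. The only difference is that you spell out, via the postorder induction, the verification that the reconstructed tree lies in $s^{-1}(\SSS_{n,k})$ --- a step the paper leaves to the reader.
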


    \begin{proof}
        The key here is that the Dyck paths $D_1,\ldots,D_\ell$ in Definition \ref{def:ss-bij} are Dyck paths that never return to the diagonal. This allows us to have a well-defined inverse map. For our convenience, for a Dyck path $D$, let $D_{[i,j]}$ be the segment of $D$ for the $i$th step to the $j$th step. Also, let
        \[ L_D(i) =~\text{\#up steps in $D_{[1,i]}$}~-~\text{\#right steps in $D_{[1,i]}$}. \]
        Since $D$ is a Dyck path, $L_D(i) \geq 0$ for all $i$.

        Given a pair $(\omega,D) \in \PPP_{n,k}$, we now construct its preimage $w = f^{-1}(\omega,D)$. First, we construct a sequence of Dyck paths $D_1,\ldots,D_\ell$ and a sequence $c(T) = (c_1,\ldots,c_{\ell+1})$ as follows.

        \begin{enumerate}
            \item Set $i = k$, $m = \ell+1$.
            \item Let $j$ be the the largest index such that $D_{[i+1,j]} = (R,R,...,R)$. If no such $j$ exists, set $c_m = 0$. If such $j$ exists, set $c_m = j-i$ and set $i = j$.
            \item Set $m = m-1$.
            \item Let $j > i$ be the smallest index such that $L(j) = L(i)$. Note that $j$ must exists since $L(i) \geq 0$ and $L(2(n+k)) = 0$. Set $D_m = D_{[i+1,j]}$ and set $i = j$.
            \item If $i = 2(n+k)$, stop. Else, return to Step 2.
        \end{enumerate}
    
        Note that in Step 4 above, each $D_m$ is indeed a Dyck path since $L(i) = L(j)$. Furthermore, because of the choice of $j$, each $D_m$ is a Dyck path that never returns to the ground level. Next, we construct a sequence of binary trees $B_1,\ldots,B_\ell$ as follows. For each $D_i$,
        \begin{enumerate}
            \item since $D_i$ never returns to the ground level, we can write $D_i = U,D_i',R$ where $D_i'$ is a Dyck path;
            \item let $B_i'$ be the Dyck path corresponding to $D'_i$ by the bijection in Section \ref{subsec:catalan-convolution};
            \item construct $B_i$ by adding a leaf to the left of the left-most node in $B_i'$.
        \end{enumerate}

        Let $a_i$ be the leaf added to $B_i'$ in Step $3$ above. For each $1\leq i \leq \ell-1$, we add $B_{i+1}$ as the right subtree of $a_i$. Label the resulting tree from $k+1$ to $n+k$ in postorder. This gives the core tree of $T = \mathcal{I}^{-1}(w)$.
        
        Next, let $T_1$ be the decreasing tree corresponding to $\omega_1\ldots\omega_{c_1}$. Note that if $c_1 = 0$, $T_1$ is simply an empty tree. We add $T_1$ as the left subtree of $a_1$. Similarly, for $2\leq i \leq \ell$, let $T_i$ be the decreasing tree corresponding to $\omega_{c_1+\ldots+c_{i-1}+1}\ldots\omega_{c_1+\ldots+c_{i}}$. We add $T_i$ as the left subtree of $a_i$. Finally, let $T_{\ell+1}$ be the decreasing tree corresponding to $\omega_{c_1+\ldots+c_{\ell}+1}\ldots\omega_{c_{k}}$. We add $T_{\ell+1}$ as the right subtree of $a_\ell$.

        One can check that the above process is the exact inverse of the map $f_{n,k}$. Furthermore, in this process, every step is well-defined, i.e. we do not have to make any choices. Thus, $f_{n,k}$ is a bijection.
    \end{proof}

    \begin{cor}\label{cor:ss-size}
        For all $n,k \geq 0$, we have
        \[ |s^{-1}(\SSS_{n,k})| = k! \cdot C_n^{(k)}. \]
    \end{cor}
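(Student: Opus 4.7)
The plan is to derive the corollary essentially as an immediate consequence of the bijection established in Proposition~\ref{prop:ss-bij}. By that proposition, $f_{n,k}$ is a bijection between $s^{-1}(\SSS_{n,k})$ and $\PPP_{n,k}$, so it suffices to compute $|\PPP_{n,k}|$.

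By Definition~\ref{def:P_n,k}, $\PPP_{n,k} = \{(w,D) \mid w \in \SSS_k, D \in \DDD_{n,k}\}$, so the size factors as $|\PPP_{n,k}| = |\SSS_k| \cdot |\DDD_{n,k}|$. The first factor is trivially $k!$. For the second factor, I would appeal to the combinatorial interpretation of the Catalan convolution recalled in Section~\ref{subsec:catalan-convolution}: $C_n^{(k)}$ counts Dyck paths of length $2(n+k)$ beginning with $k$ up steps (the decomposition
\[ \underbrace{U,\ldots,U}_{k}, D_1, R, D_2, R, \ldots, D_k, R, D_{k+1} \]
with each $D_i$ a Dyck path of length $2a_i$ and $\sum a_i = n$ matches precisely the defining sum $\sum C_{a_1}\cdots C_{a_{k+1}}$). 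This is exactly the definition of $\DDD_{n,k}$, so $|\DDD_{n,k}| = C_n^{(k)}$.

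Combining these, $|s^{-1}(\SSS_{n,k})| = |\PPP_{n,k}| = k! \cdot C_n^{(k)}$, as claimed. There is no real obstacle at this stage; all the substantive work was done in setting up the bijection $f_{n,k}$ and verifying that its inverse is well-defined, which relies on the fact that the Dyck paths $D_1,\ldots,D_\ell$ produced from the core tree never return to the ground level before their final step. Once Proposition~\ref{prop:ss-bij} is in hand, the corollary is a matter of multiplying the two factor counts.
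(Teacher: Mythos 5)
Your proposal is correct and matches the paper's (implicit) argument exactly: the corollary is stated as an immediate consequence of the bijection $f_{n,k}$ from Proposition~\ref{prop:ss-bij}, with $|\PPP_{n,k}| = |\SSS_k|\cdot|\DDD_{n,k}| = k!\cdot C_n^{(k)}$ via the Dyck-path interpretation of the Catalan convolution from Section~\ref{subsec:catalan-convolution}. Nothing further is needed.
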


    Recall that $C^{(0)}_k = C_n$. Thus, setting $k = 0$ in Corollary \ref{cor:ss-size}, we recover the well-known result that $|s^{-1}(12\ldots n)| = C_n$.

    \begin{prop}\label{prop:ss-bij-des}
        For any $w\in s^{-1}(\SSS_{n,k})$, we have
        \[ \des(w) = \des(f(w)). \]
    \end{prop}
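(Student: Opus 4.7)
The strategy is to translate both sides of $\des(w) = \des(f(w))$ into edge-counts on $T = \mathcal{I}^{-1}(w)$ and match them class by class. By the observation in Section~\ref{subsec:stack-sorting} that descents of $\mathcal{I}(T)$ correspond to right edges of $T$, $\des(w)$ simply equals the number of right edges of $T$. I will partition these right edges into four disjoint types:
(I) edges lying inside some $B_i' = B_i \setminus \{a_i\}$;
(II) the $\ell - 1$ marked right edges from $a_i$ to the root of $B_{i+1}$ for $i < \ell$;
(III) the single right edge from $a_\ell$ to the root of $T_{\ell+1}$, present iff $c_{\ell+1} > 0$;
(IV) right edges contained in some $T_j$.
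Since each $T_j$ with $j\leq\ell$ hangs off $a_j$ as a \emph{left} subtree, only the edge in (III) links the core to the $T_j$'s, so these four types exhaust the right edges and $\des(w) = (\mathrm{I})+(\mathrm{II})+(\mathrm{III})+(\mathrm{IV})$.

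On the other side I match each type with a piece of $\des(f(w)) = \#\text{valleys}(D) + \des_{c(D)}(\omega)$. Inside each $D_i = UD_i'R$, valleys coincide with those of $D_i'$, and the bijection of Section~\ref{subsec:catalan-convolution} turns right edges of $B_i$ into valleys of $D_i'$; summing over $i$ yields (I). At each of the $\ell-1$ junctions between consecutive $D_i$-blocks in $D$, the path leaves an $R$, possibly crosses a run of marked $R$'s, and enters a $U$, so exactly one new $RU$-valley is created per junction, giving (II) in total. The transition from the initial marked-$R$ block into the first $D_i$-block contributes one further valley iff that initial block is non-empty, matching (III). It then remains to prove $\des_{c(D)}(\omega) = (\mathrm{IV})$. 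The observation here is that $\omega$ is the inorder reading of $T$ restricted to the $\leq k$ nodes, which a direct trace of inorder shows is the concatenation of the inorder readings of $T_1,\ldots,T_{\ell+1}$ — each $T_j$ appears as a single contiguous block, since between visits to $T_j$ and to the next nonempty $T_{j'}$ only core nodes are traversed. The resulting block sizes are precisely the nonzero $c_j$'s, and by the construction of $D$ these are exactly the blocks that $c(D)$ cuts out of $\omega$. Within one such block $\omega$ is the inorder of the decreasing binary tree $T_j$, whose descents (again by the fact from Section~\ref{subsec:stack-sorting}) equal the right edges of $T_j$; summing over $j$ gives (IV). Combining these contributions yields $\des(f(w)) = (\mathrm{I})+(\mathrm{II})+(\mathrm{III})+(\mathrm{IV}) = \des(w)$.

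The main technical point I expect to require the most care is aligning the inorder visit-order of the $T_j$'s with the left-to-right order of marked right-step blocks in $D$, together with correctly handling the $c_j$'s that vanish. This is ultimately bookkeeping, but it must be done in strict accordance with the formula for $D$ in Definition~\ref{def:ss-bij} so that the blockwise correspondence in the final step holds; everything else in the argument is a routine matching of right edges to valleys using the Catalan-section bijection.
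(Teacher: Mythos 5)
Your proposal follows essentially the same route as the paper's proof: both arguments identify $\des(w)$ with the number of right edges of $T$, partition those edges into exactly the same four classes (edges inside the $B_i\setminus\{a_i\}$, the $\ell-1$ right edges at the marked nodes, the right edge at $k+1$ present iff $c_{\ell+1}>0$, and edges inside the left/right subtrees carrying the nodes $1,\dots,k$), and match the first three classes with the valleys of $D$ and the last with $\des_{c(D)}(\omega)$ exactly as you describe. The one step you defer as "bookkeeping" --- aligning the inorder block order of the $T_j$ (which is $c_1,\dots,c_{\ell+1}$) with the left-to-right order of the marked right-step runs in $D$ (which, from Definition~\ref{def:ss-bij}, is $c_{\ell+1},\dots,c_1$) --- is actually a reversal rather than an identity, so it deserves an explicit word on why the intended convention for $c(D)$ makes the block structures agree; but the paper's own proof elides precisely the same point by writing $\des_{(c_1,\ldots,c_{\ell+1})}(\omega)=\des_{c(D)}(\omega)$ without comment, so your argument matches the paper's in both substance and level of detail.
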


    \begin{proof}
        Let $f(w) = (\omega,D)$, $T = \mathcal{I}^{-1}(w)$, and let $D_1,\ldots,D_\ell$ be the Dyck paths corresponding to $T$. Also, let $a_1,\ldots,a_{\ell-1}$ be the marked nodes in the construction at the beginning of the section. Let $B_1,\ldots,B_\ell$ be the disjoint trees obtained by  of $T$ corresponding to $D_1,\ldots,D_\ell$, respectively. Recall from Section \ref{subsec:catalan-convolution} that $\des(w)$ equals the number of right edges in $T$. First, we will show that the valleys in $D$ correspond to the right edges incident to nodes $k+1,\ldots,n+k$ in $T$. There are three types of such right edges: those in the subtrees $B_i$'s, the right edges of $a_i$'s, and the right edge of node $k+1$.

        For the first type, recall that for each $i$, $D_i = U,D_i',R$ where $D_i'$ is the Dyck path corresponding to $B_i \backslash \{a_i\}$ through the bijection in Section \ref{subsec:catalan-convolution}. By the definition of that bijection, the number of valleys in $D_i'$ is the same as the number of right edges in $B_i \backslash \{a_i\}$. Also recall that $a_i$ is the leftmost leaf in $B_i$, so adding $a_i$ back to $B_i$ does not create more right edges. Similarly, adding an up step before $D_i'$ and a right step after does not create more valleys. Thus, the number of right edges in $B_i$ is the same as the number of valleys in $D_i$.

        For the second type, there are exactly $\ell-1$ such right edges, one of each $a_i$'s for $1\leq i \leq \ell-1$. In $D$, there are also exactly $\ell - 1$ valleys between $D_{i+1}$ and $D_{i}$ for $1\leq i \leq \ell -1$. Finally, for the third type, the right edge of node $k+1$ exists if and only if there are some nodes in the right subtree of $k+1$, i.e. $c_{\ell+1}$ is positive. In $D$, there is one more valley before $D_\ell$ if and only if $c_{\ell+1}$ is positive.

        Now we prove that the number of right edges that are only incident to nodes $1,\ldots,k$ in $T$ is the same as $\des_{c(D)}(\omega)$. Recall that $\omega$ can be obtained from $T$ by reading the nodes $1,\ldots,k$ in inorder. Therefore, the right edges in the left subtree of $a_1$ correspond to the descents in $\omega_1\ldots\omega_{c_1}$. Similarly, the right edges in the left subtree of $a_i$ correspond to the descents in $\omega_{c_{i-1}+1}\ldots\omega_{c_i}$. Finally, the right edges in the right subtree of $a_\ell = k+1$ correspond to the descents in $\omega_{c_{\ell}+1}\ldots\omega_{c_{\ell+1}}$. The number of such descents is exactly $\des_{(c_1,\ldots,c_{\ell+1})}(\omega) = \des_{c(D)}(\omega)$. This completes the proof.
    \end{proof}

\section{Poset associahedra}\label{sec:poset-ass}

    Now we turn our attention to face numbers of poset associahedra. Recall that the \textit{ordinal sum} of two posets $(P,<_P)$ and $(Q,<_Q)$ is the poset $(R,<_R)$ whose elements are those in $P \cup Q$, and $a\leq_R b$ if and only if
    \begin{itemize}
        \item $a,b\in P$ and $a\leq_P b$, or
        \item $a,b\in Q$ and $a\leq_Q b$, or
        \item $a\in P$ and $b\in Q$.
    \end{itemize}
    We denote the ordinal sum of $P$ and $Q$ as $P~\oplus~Q$. Let $C_n$ be the chain poset of size $n$ and $A_k$ be the antichain of size $k$. Let $A_{n,k} = C_{n+1} \oplus A_k$.

    An $(n,k)$-lollipop graph, denoted $L_{n,k}$, is a graph consisting of a path graph of size $n$ and a complete graph of size $k$, connected by an edge. We call the unique vertex in the complete graph that is adjacent to the path graph the \textit{link vertex}. We call the other vertices in the complete graph the \textit{clique vertices}. We call the other vertices in the path graph the \textit{path vertices}. For instance, in Figure \ref{fig:A-L-ex}, the link vertex is colored blue, and the clique vertices are colored red.

    \begin{figure}[h!]
        \centering
        \includegraphics[scale=0.7]{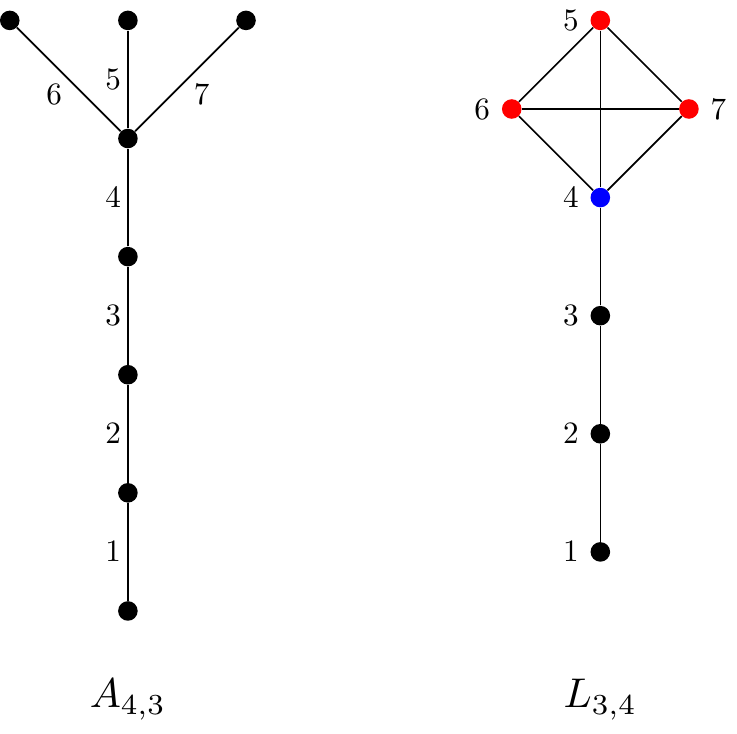}
        \caption{Poset $A_{4,3}$ and graph $L_{3,4}$}
        \label{fig:A-L-ex}
    \end{figure}
    
    Observe that the line graph of the Hasse diagram of $A_{n,k}$ is $L_{n-1,k+1}$. For example, Figure \ref{fig:A-L-ex} shows the correspondence between the edges of the Hasse diagram of $A_{4,3}$ and the vertices of $L_{3,4}$. Recall from the discussion in Section \ref{subsubsec:graph_ass_poset_ass} that this means $\AAA(A_{n,k})$ is isomorphic to $\Ass(L_{n-1,k+1})$. Therefore, instead of studying the $h$-vector of $\AAA(A_{n,k})$, we will study the $h$-vector of $\Ass(L_{n-1,k+1})$.

    Let $\BBB_{n-1,k+1}$ be the set of $\BB$-trees of $L_{n-1,k+1}$. We will construct a descent-preserving bijection between $\BBB_{n-1,k+1}$ and $\PPP_{n,k}$. First, we will label the vertices in $L_{n-1,k+1}$ as follows. We label the link vertex $n$. We label the clique vertices $n+1,\ldots,n+k$. Finally, we label the path vertices $n-1,\ldots,1$ in decreasing order starting from vertex $n$. Figure \ref{fig:tubing-to-B} shows an example of this labeling for $L_{11,4}$.

    \begin{figure}[h!]
        \centering
        \includegraphics[scale = 1.5]{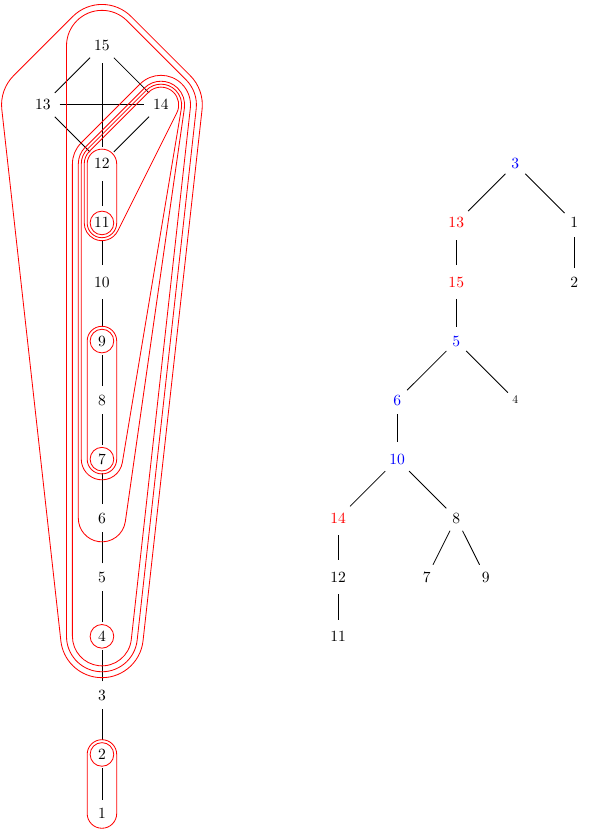}
        \caption{A tubing of $L_{11,4}$ and the corresponding $\BB$-tree}
        \label{fig:tubing-to-B}
    \end{figure}

    First, let us make some observations about the $\BB$-trees in $\BBB_{n-1,k+1}$. Our running example throughout these observations will be Figure \ref{fig:tubing-to-B}.

    \begin{lemma}\label{lem:clique-comparable}
        Let $B\in \BBB_{n-1,k+1}$. The nodes $n+1,\ldots,n+k$ are totally ordered.
    \end{lemma}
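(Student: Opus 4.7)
The plan is to argue by contradiction. Suppose that two non-link clique vertices $v,w \in \{n+1,\ldots,n+k\}$ are incomparable in $B$. My first move is to observe that $T_{\leq v}$ and $T_{\leq w}$ are then \emph{disjoint} tubes lying in the maximal tubing $\BB$ corresponding to $B$: disjointness follows from the fact that any common element of $T_{\leq v}$ and $T_{\leq w}$ would have both $v$ and $w$ on its unique path to the root, forcing $v$ and $w$ to be comparable; membership in $\BB$ is built into Definition \ref{def:B-tree}. Compatibility of these two disjoint tubes in $\BB$ then forces $T_{\leq v} \cup T_{\leq w}$ to not be a tube of $L_{n-1,k+1}$.

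The next step is a structural observation about tubes in the lollipop graph: since the non-link clique vertices are connected to the path part only through the link vertex $n$, any tube containing a non-link clique vertex is either \textbf{(i)} entirely contained in $\{n+1,\ldots,n+k\}$, or \textbf{(ii)} contains the link vertex $n$. This follows immediately from the fact that any walk in $L_{n-1,k+1}$ from a non-link clique vertex to a path vertex must pass through $n$.

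With this in hand, I would run a short case analysis. If $T_{\leq v}$ and $T_{\leq w}$ are both of type (i), their disjoint union is a proper connected subset of the clique $\{n+1,\ldots,n+k\}$, hence itself a tube, contradicting compatibility. They cannot both be of type (ii), since they would share $n$, violating disjointness. So, without loss of generality, $T_{\leq v}$ is of type (i) and $T_{\leq w}$ is of type (ii); their union is then connected via $n\in T_{\leq w}$, which is adjacent to every vertex of $T_{\leq v}$. If this union is a proper subset of $V(L_{n-1,k+1})$, it is again a tube, contradicting compatibility.

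The main obstacle, and the only remaining sub-case, is $T_{\leq v} \cup T_{\leq w} = V(L_{n-1,k+1})$. The clean way to eliminate this case is to use the fact that the root $r$ of the tree $B$ is itself one of the labelled vertices of the graph. Since $v \neq w$ are incomparable in $B$, neither is the root, so $r \notin T_{\leq v}$ and $r \notin T_{\leq w}$. However, $r$ is a vertex of $L_{n-1,k+1}$, so $r \in V(L_{n-1,k+1}) = T_{\leq v} \cup T_{\leq w}$, the desired contradiction.
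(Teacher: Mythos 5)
Your proof is correct and follows essentially the same route as the paper's: incomparability forces $T_{\leq v}$ and $T_{\leq w}$ to be disjoint tubes of the maximal tubing, and since $v$ and $w$ are adjacent clique vertices their union is connected, contradicting compatibility. The paper's version is three sentences long; your case analysis of which tubes contain the link vertex and your root argument for the sub-case $T_{\leq v}\cup T_{\leq w}=V(L_{n-1,k+1})$ merely spell out details the paper leaves implicit.
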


    \begin{proof}
        Suppose we have two incomparable nodes $i,j \in \{ n+1,\ldots,n+k \}$, then $T_{\leq i}$ and $T_{\leq j}$ are two disjoint tubes. Their union is also a tube since $i$ and $j$ are adjacent vertices. This is a contradiction.
    \end{proof}

    Lemma \ref{lem:clique-comparable} means that we have a chain $w_1 <_T w_2 <_T \ldots <_T w_k$ where $w_i \in \{n+1,\ldots,n+k\}$. We call the unique path from $w_1$ to the root the \textit{core chain} of $B$. Let $a_1 <_T a_2 <_T \ldots <_T a_\ell$ be the other nodes in the core chain. For example, in Figure \ref{fig:tubing-to-B}, the clique nodes (colored red) are totally ordered $14 <_T 15 <_T 13$. The other elements of the core chain are colored blue.

    \begin{lemma}\label{lem:clique-one-child}
        The nodes $n+1\ldots,n+k$ have at most one child.
    \end{lemma}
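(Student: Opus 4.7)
The plan is to argue by contradiction using the compatibility condition in the definition of tubings of graphs together with the specific geometry of the lollipop $L_{n-1,k+1}$. Suppose a clique vertex $i \in \{n+1,\dots,n+k\}$ has at least two children $j_1, j_2$ in the $\BB$-tree $B$. Then by Definition \ref{def:B-tree} both $T_{\leq j_1}$ and $T_{\leq j_2}$ belong to the underlying maximal tubing $\BB$, and since $j_1,j_2$ are distinct children of the same node they are incomparable in $B$, so the subtrees $T_{\leq j_1}, T_{\leq j_2}$ are disjoint.

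By the compatibility condition in Definition \ref{def:graph_def}, two disjoint tubes of $\BB$ must have the property that their union is not a tube. Since $i$ lies in neither $T_{\leq j_1}$ nor $T_{\leq j_2}$, the union $T_{\leq j_1} \cup T_{\leq j_2}$ is a proper subset of $V(L_{n-1,k+1})$, so the only way it can fail to be a tube is that it is disconnected as an induced subgraph. Hence there is no edge of $L_{n-1,k+1}$ with one endpoint in $T_{\leq j_1}$ and the other in $T_{\leq j_2}$, and the same holds for any pair of distinct children of $i$.

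Next I would invoke that $T_{\leq i}$ is itself a tube, hence a connected induced subgraph. Now $T_{\leq i} \setminus \{i\}$ is the disjoint union of the subtrees $T_{\leq c}$ over all children $c$ of $i$, and by the previous paragraph these subtrees are pairwise non-adjacent in $L_{n-1,k+1}$. Therefore connectivity of $T_{\leq i}$ forces $i$ to have a neighbor in $L_{n-1,k+1}$ inside each $T_{\leq c}$. In the lollipop graph the neighbors of the clique vertex $i$ are exactly the remaining vertices of the $(k+1)$-clique, namely $\{n,n+1,\dots,n+k\} \setminus \{i\}$.

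The finishing move is to extract a contradiction from this: choose $v_1 \in T_{\leq j_1}$ and $v_2 \in T_{\leq j_2}$ that are neighbors of $i$. Both lie in $\{n,n+1,\dots,n+k\} \setminus \{i\}$, and $v_1 \neq v_2$ because the subtrees are disjoint. But every pair of distinct vertices inside the $(k+1)$-clique is joined by an edge, so $v_1 v_2$ is an edge of $L_{n-1,k+1}$ with endpoints in different children subtrees, contradicting what we established above. Hence $i$ has at most one child. The main subtle point is the use of connectivity of $T_{\leq i}$ to guarantee that each child subtree meets $N(i)$; this relies on the fact that, once we know no edges cross between child subtrees, any path in the induced subgraph on $T_{\leq i}$ from $i$ to a vertex of $T_{\leq c}$ must begin with an edge of $G$ from $i$ to $T_{\leq c}$ itself.
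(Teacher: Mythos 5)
Your argument is correct and follows essentially the same route as the paper: both proofs observe that each child subtree $T_{\leq j}$ of a clique vertex $i$ must contain a neighbor of $i$, that all neighbors of $i$ lie in the clique and are hence pairwise adjacent, and that this contradicts the compatibility (disjoint tubes with adjacent vertices would have a connected union). You simply spell out more carefully than the paper why connectivity of $T_{\leq i}$ forces each child subtree to meet $N(i)$; this is a welcome elaboration rather than a different proof.
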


    \begin{proof}
        Suppose some node $i\in\{n+1,\ldots,n+k\}$ has two children $i_1,i_2$. Then $T_{\leq i_1}$ and $T_{\leq i_2}$ both have nodes that are adjacent to $i$. However, all nodes that are adjacent to $i$ are pairwise adjacent. This is a contradiction since $T_{\leq i_1}$ and $T_{\leq i_2}$ are two disjoint tubes.
    \end{proof}

    Lemma \ref{lem:clique-one-child} means that in the core chain of $B$, the only nodes that may have two children are $a_1,\ldots,a_\ell$. For these nodes, we call the branch that contains $w_1$ their \textit{main branch}. We call the other branch, if exists, their \textit{secondary branch}. For instance, in Figure \ref{fig:tubing-to-B}, the clique nodes all have one child. The other nodes in the core chain may or may not have two children. For node 10, which has two children, the secondary branch consists of the nodes $7,8,9$.

    \begin{lemma}\label{lem:a_i-branch}
        We have $a_1 > a_2 > \ldots > a_\ell$. Moreover, the secondary branch of $a_i$ contains exactly the nodes $a_{i+1}+1, a_{i+1}+2, \ldots, a_i-1$.
    \end{lemma}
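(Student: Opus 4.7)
The plan is to pin down the tube $T_{\leq a_i}$ as a concrete subset of the vertex set of $L_{n-1,k+1}$, and then extract the lemma by comparing consecutive tubes $T_{\leq a_i}$ along the core chain. I will write $W_i$ for the set of clique vertices (i.e.\ vertices in $\{n+1,\ldots,n+k\}$) contained in $T_{\leq a_i}$, which coincides with the clique vertices strictly below $a_i$ on the core chain. The main branch of $a_i$ contains $W_i$ and no other clique vertex, since all clique vertices lie on the core chain and those above $a_i$ are not descendants of $a_i$.

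My first step will be to show that the main branch equals $\{a_i+1, a_i+2, \ldots, n\} \cup W_i$. It is a connected tube containing $w_1$; if it contains any path vertex, then by connectivity it must also contain the link vertex $n$ (the unique vertex adjacent to both clique and non-link path vertices), so the path vertices in the main branch form an interval $\{p, p+1, \ldots, n\}$. Since $a_i$ itself is not in the main branch, $p > a_i$, and since $T_{\leq a_i}$ is connected, $a_i$ must be adjacent in $L_{n-1,k+1}$ to some vertex of the main branch; when $a_i < n$ the only such vertex is $a_i + 1$, forcing $p = a_i + 1$. The degenerate case $a_i = n$ is checked separately: the main branch cannot contain any path vertex at all (since $n=a_i$ itself is not in it), so it equals $W_i$, matching the formula under the convention $\{n+1, \ldots, n\} = \emptyset$.

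Next I will show that the secondary branch $S_i$, when it exists, has the form $\{q_i, q_i+1, \ldots, a_i - 1\}$. Being disjoint from the core chain, $S_i$ contains no clique vertex, hence forms a connected subset of the path and is therefore an interval. Because the main-branch and secondary-branch children of $a_i$ are incomparable in the $\BB$-tree, the union of their subtrees is not a tube, so $L_{n-1,k+1}$ has no edge between these branches; connectivity of $T_{\leq a_i}$ then forces $a_i$ to have a neighbor inside $S_i$, and the only such neighbor not already in the main branch is $a_i - 1$. Combined with $S_i \subseteq \{1, \ldots, a_i - 1\}$, this yields the interval form, and hence $T_{\leq a_i} = \{q_i, q_i+1, \ldots, n\} \cup W_i$ (where we set $q_i = a_i$ when $S_i$ is empty).

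Finally, to pin down $q_i$ for $i < \ell$, I will use $T_{\leq a_i} \subsetneq T_{\leq a_{i+1}}$. Between $a_i$ and $a_{i+1}$ on the core chain lie only $w$-nodes, each having a single child by Lemma~\ref{lem:clique-one-child}, so no branching occurs and the main branch of $a_{i+1}$ consists of $T_{\leq a_i}$ together with these (possibly none) intermediate $w$-nodes. Its path vertices are therefore still $\{q_i, \ldots, n\}$. But applying the main-branch formula to $a_{i+1}$ tells me these path vertices form $\{a_{i+1}+1, \ldots, n\}$, so $q_i = a_{i+1}+1$ and $S_i = \{a_{i+1}+1, \ldots, a_i - 1\}$. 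For the boundary case $i = \ell$, the fact that $T_{\leq a_\ell}$ is the full vertex set lets me read off $S_\ell = \{1, \ldots, a_\ell - 1\}$, matching the formula under the convention $a_{\ell+1} = 0$. The chain of inequalities $a_1 > a_2 > \cdots > a_\ell$ then follows immediately from $q_i \leq a_i$ combined with $q_i = a_{i+1} + 1$. The delicate point will be making the connectivity argument in the first step work uniformly across the two cases $a_i < n$ and $a_i = n$.
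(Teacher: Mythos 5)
Your argument is correct and follows essentially the same route as the paper: both proofs use connectivity through the cut vertex $n$ to force the main branch to be $\{a_i+1,\ldots,n\}$ together with the clique nodes below $a_i$, incompatibility of the two disjoint branches to place $a_i-1$ in the secondary branch, and the one-child property of clique nodes to compare $T_{\leq a_i}$ with the main branch of $a_{i+1}$. One small slip in your boundary case: $T_{\leq a_\ell}$ need not be the full vertex set, since the root may be a clique node lying above $a_\ell$ on the core chain; however, everything above $a_\ell$ is a clique node with a single child, so $T_{\leq a_\ell}$ still contains all of $\{1,\ldots,n\}$, which is all that your reading of $S_\ell = \{1,\ldots,a_\ell-1\}$ requires.
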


    \begin{proof}
        First, let us show that $a_i > a_{i+1}$. Observe that $T_{\leq a_i}$ is a tube that contains both $a_{i}$ and $w_1$. Thus, it has to contain all the nodes $a_i, a_i + 1, \ldots, n$. This means that $a_{i+1}$ cannot be any of $a_i,a_i+1,\ldots, n$. Hence, $a_i > a_{i+1}$.

        For the second statement, note that $T_{\leq a_i}$ contains all the nodes $a_i, a_i + 1, \ldots, n$. The main branch of $a_i$ contains $w_1$, so it has to also contain $n$. This is because otherwise the two branches of $a_i$ are two disjoint tubes that contains adjacent vertices $w_1$ and $n$, which is impossible. Similarly, since the main branch of $a_i$ contains $n$ it has to also contain $n-1$. Repeating this argument, the main branch of $a_i$ all of $n,n-1,\ldots,a_i+1$.

        Applying the same argument to $a_{i+1}$, we have the main branch of $a_{i+1}$ also contains all of $n, n-1,\ldots, a_{i+1} + 1$. In the main branch of $a_{i+1}$, the unique path from $a_i$ to $a_{i+1}$ consists only of clique nodes, which only have one child. Thus, $T_{\leq a_i}$ contains all of $n, n-1,\ldots, a_{i+1} + 1$.

        On the other hand, the main branch of $a_i$ cannot contain any node $j < a_i$. This is because otherwise the main branch is a tube that has $w_1$ and $j$ but not $a_i$. Such tube is disconnected, which is impossible. For the same reason, the main branch of $a_{i+1}$ cannot contain any node $j < a_{i+1}$. Similar to above, this means that $T_{\leq a_i}$ cannot contain any node $j < a_{i+1}$.

        Therefore, the secondary branch of $a_i$ has to contain all of $a_{i+1}+1,\ldots,a_i - 1$. Furthermore, by the above argument, no other nodes can be in $T_{\leq a_i}$. Hence, the secondary branch of $a_i$ contains exactly the nodes $a_{i+1}+1,\ldots,a_i - 1$.
    \end{proof}

    For example, in Figure \ref{fig:tubing-to-B}, the secondary branch of $10$ consists of nodes $7,8,9$, which are exactly the numbers between $a_1 = 10$ and $a_2 = 6$.

    \begin{lemma}\label{lem:T_<w1}
        If $w_1$ has a child, then $T_{< w_1} = \{n,n-1,\ldots,a_1+1\}$.
    \end{lemma}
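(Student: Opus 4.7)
The plan is to combine Lemmas~\ref{lem:clique-comparable}, \ref{lem:clique-one-child}, and \ref{lem:a_i-branch} to pin down the tree structure below $w_1$. Since $w_1$ is a clique vertex and has a child, Lemma~\ref{lem:clique-one-child} says this child is unique; call it $c$. Then $T_{<w_1} = T_{\leq c}$ is a tube of $L_{n-1,k+1}$.

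For the inclusion $T_{<w_1} \subseteq \{a_1+1, \ldots, n\}$: no clique vertex $w_j$ with $j \geq 2$ can lie in $T_{<w_1}$, since each such $w_j$ is a strict ancestor of $w_1$, so $T_{<w_1}$ sits inside $\{1, \ldots, n\}$. Moreover $T_{<w_1}$ lies in the main branch of $a_1$, and the argument inside the proof of Lemma~\ref{lem:a_i-branch} shows that this main branch contains no node with label strictly less than $a_1$. Hence $T_{<w_1} \subseteq \{a_1+1, \ldots, n\}$.

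For the reverse inclusion I would first pin down the shape of the main branch of $a_1$. Let $r_1$ be the main child of $a_1$, and consider the path from $r_1$ down to $w_1$ in $B$. Every node on this path is an ancestor of $w_1$, hence lies on the core chain, hence is either some $w_i$ or some $a_j$. No $a_j$ can occur, since $a_1$ is by definition the minimum of the $a_j$'s in the tree order. Thus the path is $r_1 = w_m <_T w_{m-1} <_T \cdots <_T w_1$ for some $m \geq 1$. By Lemma~\ref{lem:clique-one-child} each such $w_i$ has at most one child, forcing the child of $w_i$ to be $w_{i-1}$ for $i > 1$ and the child of $w_1$ to be $c$. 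So the main branch of $a_1$ equals $\{w_1, \ldots, w_m\} \cup T_{<w_1}$. Lemma~\ref{lem:a_i-branch} guarantees that $\{a_1+1, \ldots, n\}$ is contained in the main branch of $a_1$, and since these are all path or link vertices (distinct from the clique vertices $w_1, \ldots, w_m$), they must all lie in $T_{<w_1}$. Combining both inclusions yields $T_{<w_1} = \{n, n-1, \ldots, a_1+1\}$.

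The main obstacle is the structural claim that the path in $B$ from the main child of $a_1$ down to $w_1$ is a pure chain of clique vertices with no side-branches; once that is established, Lemma~\ref{lem:a_i-branch} does all the remaining work.
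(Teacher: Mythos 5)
Your proof is correct and follows essentially the same route as the paper: both rely on the facts established inside the proof of Lemma~\ref{lem:a_i-branch} (the main branch of $a_1$ contains all of $n,\dots,a_1+1$ and nothing labeled below $a_1$) together with the observation that the path from $a_1$ down to $w_1$ is a chain of clique nodes, each with a single child. The only difference is that you explicitly justify that last structural claim (via the core chain and the minimality of $a_1$), whereas the paper simply asserts it.
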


    \begin{proof}
        From the proof of Lemma \ref{lem:a_i-branch}, the main branch of $a_1$ contains all of $n,n-1,\ldots,a_1 + 1$. Moreover, the unique path from $w_1$ to $a_1$ consists only of clique nodes, which only have one child. Thus, $T_{< w_1} = \{n,n-1,\ldots,a_1+1\}$.
    \end{proof}

    Back to our running example, in Figure \ref{fig:tubing-to-B}, the descendants of $w_1 = 14$ are $11$ and $12$, which are exactly the numbers from $n = 12$ to $a_1+1 = 11$.

    Lemmas \ref{lem:a_i-branch} and \ref{lem:T_<w1} means that the descendants of $w_1$ form a $\BB$-tree $B_0$ of the subgraph $(a_1+1) - (a_1 + 2) - \ldots - n$. This subgraph is a path graph of $ n - a_1$ elements. Similarly, the secondary branch of each $a_i$ forms a $\BB$-tree $B_i$ of the subgraph $(a_{i+1} + 1) - (a_{i+1} + 2) - \ldots - (a_{i}-1)$. This is also a path graph of $a_{i} - a_{i+1} -1$ elements (with $a_{\ell + 1} = 0$).

    In \cite[Section 10.2]{postnikov2006faces}, it is shown that there is a bijection between $\BB$-trees of path graphs and binary trees. Moreover, the descent edges of the $\BB$-trees correspond to the right edges of the binary trees. This means that there is a bijection between $\BB$-trees of path graphs and Dyck paths such that the descent edges of the $\BB$-trees correspond to the valleys of the Dyck paths.

    Now we are ready to state our bijection. Let $B_0$ be the tree formed by the descendants of $w_1$. For $1\leq i \leq \ell$, let $B_i$ be the tree formed by the secondary branch of $a_i$. Next, we construct a sequence of Dyck paths $D_1,\ldots,D_\ell$ as follows. For each $B_i$ with $1\leq i \leq \ell$,

    \begin{itemize}
        \item let $D_i'$ be the Dyck path corresponding to $B_i$ by the bijection above;
        \item let $D_i$ be $U,D_i',R$.
    \end{itemize}

    Once again, each $D_i$ is a Dyck path that never returns to the diagonal. Finally, let $D_0$ be the Dyck path corresponding to $B_0$. $D_0$ is a Dyck path that may return to the diagonal. Furthermore, for $1\leq i \leq \ell$, $D'_i$ is a Dyck path of length $(a_i - a{i+1} - 1)$, so $D_i$ is a Dyck path of length $2(a_i - a_{i+1})$. $D_0$ is a Dyck path of length $2(n-a_1)$. Thus, the total length of these Dyck paths is exactly $2n$. Now we are ready to state our bijection.

    \begin{definition}\label{def:bt-bij}
        Define the map
        \[ g_{n,k}: \BBB_{n,k} \rightarrow \PPP_{n,k} \]
        as follows. For $B\in \BBB_{n,k}$, we construct $w_1,\ldots,w_k$ and $a_1,\ldots,a_\ell$ as above. Let $D_0, D_1,\ldots,D_\ell$ be the sequence of Dyck paths constructed as above. Also, for $1< i \leq \ell$, let $c_i$ be the number of clique nodes between $a_i$ and $a_{i-1}$. Let $c_1$ be the number of clique nodes below $a_1$ and $c_{\ell+1}$ be the number of clique nodes above $a_\ell$. We have $g_{n,k}(B) = (w, D)$, where
        \begin{itemize}
            \item $w = (w_1-n),(w_2-n),\ldots, (w_k-n)$, and
            \item $D$ has the form
            \[ \underbrace{U,\ldots,U}_{k~\text{up steps}}, D_0, \underbrace{R,\ldots,R}_{c_{1}~\text{right steps}}, D_1, \underbrace{R,\ldots,R}_{c_2~\text{right steps}}, D_2, \ldots, D_{\ell-1}, \underbrace{R,\ldots,R}_{c_\ell~\text{right steps}}, D_\ell, \underbrace{R,\ldots,R}_{c_{\ell+1}~\text{right steps}}. \]
        \end{itemize}
        By definition, $c_1 + \ldots + c_{\ell+1}$ is the total number of clique nodes, which is $k$. The total length of $D_1,\ldots,D_\ell$ is $2n$. Thus, $D$ is a Dyck path of length $2(n+k)$ starting with $k$ up steps, i.e. $D\in \DDD_{n,k}$. Clearly, $w \in \SSS_k$. Therefore, $g_{n,k}(B)$ is indeed in $\PPP_{n,k}$.
    \end{definition}

    \begin{prop}\label{prop:bt-bij}
        The map $g_{n,k}$ above is a bijection.
    \end{prop}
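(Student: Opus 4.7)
The plan is to construct an explicit inverse $g_{n,k}^{-1}\colon \PPP_{n,k}\to\BBB_{n,k}$, in the spirit of Proposition~\ref{prop:ss-bij}. Given $(w,D)\in\PPP_{n,k}$, I would (i) parse $D$ into the blocks $R^{c_j}$ and Dyck-subpaths $D_i$ appearing in Definition~\ref{def:bt-bij}, (ii) invert the Dyck-path-to-$\BB$-tree bijection for path graphs on each piece, and (iii) assemble the result into a rooted tree on $[n+k]$ with labels forced by Lemmas~\ref{lem:clique-comparable}--\ref{lem:T_<w1}, then verify it is a $\BB$-tree of the lollipop.

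The key step is (i). After stripping the initial $k$ up-steps, I track the running minimum $M$ of the level function, which begins at $k$. Because each $D_i$ with $i\ge 1$ has the primitive form $U\,D_i'\,R$ and stays strictly above its base level except at its endpoints, while $D_0$ stays at level $\ge k$ throughout, the right steps that strictly decrease $M$ are exactly those belonging to the blocks $R^{c_1},\ldots,R^{c_{\ell+1}}$. These min-dropping right steps therefore occur in maximal consecutive runs, and each non-empty run is a single block, which reads off their positions and sizes. Between two consecutive blocks the running minimum is constant at some height $m$, and the intervening subpath decomposes uniquely into its successive primitive arches $D_i$ by repeatedly taking the shortest prefix that returns to height $m$. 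To separate $D_0$ from the later $D_i$'s I use that $c_1\ge 1$ whenever $\ell\ge 1$, since $w_1$ is itself a clique vertex strictly below $a_1$ on the core chain; consequently the first min-dropping block is exactly $R^{c_1}$ and $D_0$ is the (possibly empty) subpath preceding it. This uniquely recovers the weak composition $(c_1,\ldots,c_{\ell+1})$ and the Dyck paths $D_0,D_1,\ldots,D_\ell$.

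Step (ii) then applies the standard bijection between Dyck paths and $\BB$-trees of a path graph from Section~10.2 of~\cite{postnikov2006faces} (combined with the bijection of Section~\ref{subsec:catalan-convolution}) to $D_0$ and to each $D_i'$, producing $B_0,B_1,\ldots,B_\ell$. For step (iii) the labels are forced: $a_1=n-|B_0|$ and $a_{i+1}=a_i-1-|B_i|$ for $1\le i<\ell$, while the $j$-th clique node along the core chain, counted upward from $w_1$, receives label $n+w_j$. I then build the core chain from the bottom by placing $w_1,\ldots,w_{c_1},a_1,w_{c_1+1},\ldots,a_\ell,w_{k-c_{\ell+1}+1},\ldots,w_k$ in order, attach $B_0$ below $w_1$, and attach each $B_i$ as the secondary branch of $a_i$. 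A direct check mirroring Lemmas~\ref{lem:clique-comparable}--\ref{lem:T_<w1} confirms that the resulting tree lies in $\BBB_{n,k}$, and that composing $g_{n,k}$ with this construction yields the identity in both directions. The main obstacle is the correctness of (i), specifically pinning down where $D_0$ ends when $D_0$ itself may return to height $k$ several times; the combination of the primitive-arch property of $D_i$ for $i\ge 1$ with the $c_1\ge 1$ observation is what makes the parsing forced, after which every remaining step of the inverse is uniquely determined.
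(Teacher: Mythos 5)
Your proposal is correct and follows essentially the same route as the paper: the paper's proof also inverts $g_{n,k}$ by observing that $D_1,\ldots,D_\ell$ never return to the diagonal and that $D_0$ is the subpath occurring before the first marked right step, deferring the remaining assembly to the argument of Proposition~\ref{prop:ss-bij}. Your write-up additionally makes explicit the one point the paper leaves implicit, namely that $c_1\ge 1$ (because $w_1$ lies strictly below $a_1$ on the core chain) is what forces the first marked right step to immediately follow $D_0$, so the parsing of $D$ is unambiguous.
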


    \begin{proof}
        Again, the key here is that the Dyck paths $D_1,\ldots,D_\ell$ are Dyck paths that never return to the diagonal. Thus, these can be recovered uniquely similar to in Proposition~\ref{prop:ss-bij}. $D_0$ is a Dyck path that may return to the diagonal, but it is the Dyck path that occurs before the first marked right step. Thus, $D_0$ can also be recovered uniquely. The rest of the inverse map can be constructed similar to Proposition \ref{prop:ss-bij}. We leave the details to the reader.
    \end{proof}

    \begin{prop}\label{prop:bt-bij-des}
        For any $B\in \BBB_{n,k}$, we have
        \[ \des(B) = \des(g(B)). \]
    \end{prop}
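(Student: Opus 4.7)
The plan is to adapt the strategy of Proposition~\ref{prop:ss-bij-des}: partition the edges of $B$ by type, determine which are descents, and match each descent class to its contribution in $\des(g(B)) = \des_{c(D)}(w) + \#\text{valleys}(D)$. Every edge of $B$ will fall into one of four groups: (a) edges internal to some $B_i$ for $0 \leq i \leq \ell$; (b) edges on the core chain between two clique nodes in a common block (i.e., with no $a_j$ between them); (c) edges on the core chain incident to some $a_j$; and (d) the ``attachment'' edges from $w_1$ to the root of $B_0$ and from each $a_j$ to the root of $B_j$.

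Groups (a) and (d) can be handled quickly. For (a), the descent-preserving bijection between $\BB$-trees of path graphs and binary trees from \cite[Section~10.2]{postnikov2006faces}, composed with the binary-tree-to-Dyck-path bijection recalled in Section~\ref{subsec:catalan-convolution}, gives that descents within each $B_i$ correspond to valleys within $D'_i$; these coincide with the internal valleys of $D_i = U D'_i R$, since prepending $U$ and appending $R$ creates no new valley. For (d), in both cases the parent label strictly exceeds the child label -- the child of $w_1$ is a path vertex with label $\leq n < w_1$, while the child of $a_j$ in $B_j$ has label in the range $[a_{j+1}+1, a_j-1]$ which is $< a_j$ -- so under the convention that a descent is an edge whose parent has the \emph{smaller} label, these edges contribute nothing.

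For (b), Lemma~\ref{lem:clique-one-child} forces $w_{i+1}$ to be the parent of $w_i$ whenever the two lie in the same block, so the edge is a descent iff $w_{i+1} < w_i$ as labels, equivalently iff position $i$ is a descent of $w = (w_1 - n, \ldots, w_k - n)$ with $i$ and $i+1$ in the same block of $c(D)$; summing over blocks gives exactly $\des_{c(D)}(w)$. For (c), Lemma~\ref{lem:a_i-branch} gives $a_1 > a_2 > \cdots > a_\ell$, so the unique chain-child of each $a_j$ -- either a clique node (label $> n \geq a_j$) or $a_{j-1}$ (label $> a_j$) -- has strictly larger label, making every such edge a descent. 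Conversely, edges on the chain with $a_j$ as \emph{child} and a clique node as parent are never descents. Thus (c) yields exactly $\ell$ descents.

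It remains to match the $\ell$ descents of type (c) with the ``transition'' valleys of $D$ between consecutive segments $D_0, D_1, \ldots, D_\ell$. The key observation is that $c_1 \geq 1$ (since $w_1$ always sits below $a_1$ on the core chain) and each $D_i$ for $i \geq 1$ has the form $U D'_i R$, hence is non-empty and begins with $U$; so the portion of $D$ immediately preceding $D_i$ always ends in $R$ -- through the non-empty block $R^{c_i}$ when $c_i > 0$, or through $D_{i-1}$ when $c_i = 0$ -- producing exactly one transition valley per $D_i$, for a total of $\ell$. Summing the four groups gives $\des(B) = \des_{c(D)}(w) + \#\text{valleys}(D) = \des(g(B))$. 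The main delicate point I anticipate is the edge-case bookkeeping for empty $B_i$'s and zero $c_i$'s, but once these are verified the matching is essentially forced by the construction of $g_{n,k}$.
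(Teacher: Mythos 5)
Your proof is correct and follows essentially the same route as the paper's: the same partition of edges (subtree edges, clique--clique edges, edges at the $a_i$'s, and the never-descending attachment edges), the same use of Lemmas~\ref{lem:clique-one-child}--\ref{lem:T_<w1} and of the path-graph $\BB$-tree/Dyck-path correspondence, and the same matching of the $\ell$ core-chain descents at the $a_i$'s with the transition valleys before each $D_i$. Your treatment is in fact slightly more careful than the paper's on the edge cases ($c_i=0$ handled via $D_{i-1}$ ending in $R$, and the observation that $c_1\geq 1$), but this is a refinement, not a different argument.
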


    \begin{proof}
        Let $g(B) = (w,D)$. First, observe that by Lemma \ref{lem:a_i-branch}, the edge connecting $a_i$ to its secondary branch is never a descent edge. Similarly, by Lemma \ref{lem:T_<w1}, the edge connecting $w_1$ to its child is also never a descent edge. Thus, there are only two types of descent edges in $B$: those in the subtrees $B_i$'s, and the edges of the core chain.

        Recall that we have a bijection between $\BB$-trees and Dyck paths that matches the number of descent edges and the number of valleys. Thus, for the first type, for $1\leq i\leq \ell$, the number of descent edges in $B_i$ is the same as the number of valleys in $D'_i$. This is also the same as the number of valleys in $D_i = U, D'_i, R$. Finally, the number of descent edges in $B_0$ is the same as the number of valleys in $D_0$.

        For the second type, along the core chain, observe that for each $a_i$, its child in the main branch is either a clique node or $a_{i-1}$. In both cases, this child is greater then $a_i$, so the edge connecting $a_i$ to its main branch is always a descent edge. Respectively, in $D$, there is always a valley before $D_i$ for $1\leq i \leq \ell$.

        The only other descent edges are those connecting clique nodes. The number of such descent edges is exactly $\des_{(c_1,\ldots,c_{\ell+1})}(\omega) = \des_{c(D)}(\omega)$. This completes the proof.
    \end{proof}

    Combining Propositions \ref{prop:ss-bij-des} and \ref{prop:bt-bij}, we have our main theorem.

    \begin{thm}\label{thm:A_n,k-h-vector}
        Let $h = (h_0, h_1, \ldots, h_{n+k-1})$ be the $h$-vector of $\AAA(A_{n,k})$. Then $h_i$ counts the number of permutations in $s^{-1}(\SSS_{n,k})$ with exactly $i$ descents.
    \end{thm}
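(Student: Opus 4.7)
The plan is to chain together the tools assembled in the preceding sections; the theorem itself amounts to a one-step transitivity argument once the right dictionary is in place. I would begin by invoking the identification $\AAA(A_{n,k}) \cong \Ass(L_{n-1,k+1})$ from Section \ref{subsubsec:graph_ass_poset_ass}, which reduces the problem to computing the $h$-vector of the lollipop graph associahedron. Postnikov's formula (Theorem \ref{thm:b-tree-h-vector}) then rewrites this $h$-polynomial as the descent-generating function over $\BB$-trees:
\[ h_{\AAA(A_{n,k})}(t) \;=\; h_{\Ass(L_{n-1,k+1})}(t) \;=\; \sum_{B \in \BBB_{n-1,k+1}} t^{\des(B)}. \]

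Next, I would route this sum through the intermediate set $\PPP_{n,k}$ using the two bijections already built. Proposition \ref{prop:bt-bij} together with Proposition \ref{prop:bt-bij-des} provides a descent-preserving bijection $g_{n,k}: \BBB_{n-1,k+1} \to \PPP_{n,k}$, so
\[ \sum_{B \in \BBB_{n-1,k+1}} t^{\des(B)} \;=\; \sum_{(w,D) \in \PPP_{n,k}} t^{\des(w,D)}. \]
Proposition \ref{prop:ss-bij} and Proposition \ref{prop:ss-bij-des} in turn give a descent-preserving bijection $f_{n,k}: s^{-1}(\SSS_{n,k}) \to \PPP_{n,k}$, whence
\[ \sum_{(w,D) \in \PPP_{n,k}} t^{\des(w,D)} \;=\; \sum_{\sigma \in s^{-1}(\SSS_{n,k})} t^{\des(\sigma)}. \]
Composing $f_{n,k}^{-1} \circ g_{n,k}$ yields a descent-preserving bijection $\BBB_{n-1,k+1} \to s^{-1}(\SSS_{n,k})$, and reading off the coefficient of $t^i$ on the two ends of the chain of equalities identifies $h_i$ with the number of permutations in $s^{-1}(\SSS_{n,k})$ having exactly $i$ descents.

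At this stage there is no substantive obstacle remaining: all of the combinatorial content has been packed into the constructions of $f_{n,k}$ and $g_{n,k}$ and into the descent-tracking lemmas (Lemmas \ref{lem:clique-comparable}--\ref{lem:T_<w1}) that underpin $g_{n,k}$. The only bookkeeping point to attend to is the parameter convention — the preceding section introduces $\BBB_{n-1,k+1}$ for the $\BB$-trees of the lollipop $L_{n-1,k+1}$, and the auxiliary polytope isomorphism shifts the indices by one on each coordinate — so I would briefly reconcile the parameter shift at the outset to make sure the chain of equalities is unambiguous. Beyond that, the proof is a formal composition of two bijections with a single appeal to Postnikov's $\BB$-tree formula.
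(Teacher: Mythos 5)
Your proposal is correct and follows exactly the route the paper takes: identify $\AAA(A_{n,k})$ with $\Ass(L_{n-1,k+1})$, apply Theorem \ref{thm:b-tree-h-vector}, and compose the two descent-preserving bijections $g_{n,k}$ and $f_{n,k}^{-1}$ through $\PPP_{n,k}$. Your remark about reconciling the parameter shift between $\BBB_{n-1,k+1}$ and the notation in Definition \ref{def:bt-bij} is a fair bookkeeping point, but it does not change the substance of the argument.
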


    We want to point out the following result by Br\"and\'en.

    \begin{thm}[\cite{branden2008actions}]\label{thm:ss-preimage-gamma}
        For $A \subseteq \SSS_n$, we have
        \[ \sum_{\sigma \in s^{-1}(A)} x^{\des(\sigma)} = \sum_{m = 0}^{\lfloor \frac{n-1}{2}\rfloor} \dfrac{|\{ \sigma \in s^{-1}(A)~:~\text{peak}(\sigma) = m \}|}{2^{n-1-2m}} x^m(1+x)^{n-1-2m}, \]
        where $\text{peak}(\sigma)$ is the number of index $i$ such that $\sigma_{i-1} < \sigma_i > \sigma_{i+1}$.
    \end{thm}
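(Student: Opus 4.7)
The strategy is to apply Br\"and\'en's modified Foata--Strehl (``valley-hopping'') action to $s^{-1}(A)$ and decompose the descent sum over orbits.

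Br\"and\'en constructs an abelian group $G$ generated by commuting involutions $\varphi_x : \SSS_n \to \SSS_n$, one for each value $x \in \{1,\ldots,n\}$. Each $\varphi_x$ is the identity whenever $x$ is an (interior) peak or valley of $\sigma$, and otherwise slides $x$ across the contiguous block of smaller values adjacent to it. The orbit structure has the following key properties: $\text{peak}$ is constant on each orbit; an orbit through a permutation of peak count $m$ has size $2^{n-1-2m}$; and its descent generating function equals $x^m(1+x)^{n-1-2m}$ (the $m$ peaks force $m$ descents, and each of the remaining $n-1-2m$ free positions contributes a factor $1+x$). These facts are what yield the classical gamma-nonnegativity of the Eulerian polynomial.

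The main obstacle is the key lemma that $G$ preserves every fiber of $s$, that is, $s \circ \varphi_x = s$ for all $x$. The cleanest argument uses the recursive structure $s(\sigma) = s(L)\,s(R)\,n$, where $\sigma = L\,n\,R$ with $n = \max\sigma$ (a direct consequence of Proposition~\ref{prop:stacksorting-postorder}). For $x < n$, the valley-hopping move only slides $x$ across values smaller than $x$, so $x$ never crosses the barrier $n$; hence $\varphi_x$ acts entirely within $L$ or within $R$ (whichever contains $x$), leaving $n$ and the other block fixed. Induction on $n$ then gives $s(\varphi_x(\sigma)) = s(\sigma)$. For $x = n$: if $n$ is interior it is automatically a peak and $\varphi_n = \mathrm{id}$; if $n$ is at a boundary position then one of $L, R$ is empty and $\varphi_n$ merely moves $n$ to the other end, which preserves the product $s(L)\,s(R)\,n$.

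Given the key lemma, $s^{-1}(A)$ is $G$-invariant, hence a disjoint union of $G$-orbits. Summing orbit-wise descent generating functions and grouping orbits by their common peak count $m$ yields
\[ \sum_{\sigma \in s^{-1}(A)} x^{\des(\sigma)} \;=\; \sum_{m=0}^{\lfloor (n-1)/2 \rfloor} \frac{|\{\sigma \in s^{-1}(A) : \text{peak}(\sigma) = m\}|}{2^{n-1-2m}}\, x^m(1+x)^{n-1-2m}, \]
which is the claimed identity. The only step that requires genuine work is the key lemma; once it is in hand, the rest is Br\"and\'en's classical orbit-counting argument applied to the $G$-invariant subset $s^{-1}(A)$ in place of all of $\SSS_n$.
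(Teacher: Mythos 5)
The paper gives no proof of this statement---it is quoted verbatim from \cite{branden2008actions}---so there is nothing internal to compare against; your proposal is a correct reconstruction of Br\"and\'en's original argument, including the one genuinely substantive step, the fiber-preservation lemma $s\circ\varphi_x=s$, proved via the recursion $s(LnR)=s(L)\,s(R)\,n$. The only point worth making explicit is the boundary convention $\sigma_0=\sigma_{n+1}=\infty$: it is what guarantees both that $\varphi_x$ acting on a letter of $L$ (or $R$) inside $\sigma=LnR$ agrees with the modified Foata--Strehl action on $L$ as a standalone permutation (since $n$ plays the role of the boundary letter for its neighbors), and that an orbit with $m$ interior peaks has exactly $m+1$ valleys and hence $n-1-2m$ free letters, each contributing a factor $(1+x)$.
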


    This gives the following corollary.

    \begin{cor}\label{cor:A_n,k-gamma-nonnegative}
        The $\gamma$-vector of $\AAA(A_{n,k})$ is nonnegative.
    \end{cor}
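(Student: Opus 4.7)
The corollary is essentially immediate from combining the two results that precede it, so my plan is largely bookkeeping to make sure the parameters line up. Here is how I would carry it out.

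First, I would note that $\AAA(A_{n,k})$ is a simple polytope of dimension $d = |A_{n,k}| - 2 = n+k-1$, so by Dehn--Sommerville its $h$-polynomial $h(x)$ is palindromic of degree $n+k-1$. Consequently $h(x)$ admits a unique expansion in the basis $\{x^m(1+x)^{n+k-1-2m}\}_{0 \le m \le \lfloor (n+k-1)/2\rfloor}$, and the coefficients in this expansion are by definition the entries of the $\gamma$-vector. So to prove nonnegativity it suffices to exhibit any expansion of $h(x)$ in this basis with nonnegative coefficients.

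Next, by Theorem~\ref{thm:A_n,k-h-vector} we have
\[
h(x) = \sum_{\sigma \in s^{-1}(\SSS_{n,k})} x^{\des(\sigma)}.
\]
Since $\SSS_{n,k} \subseteq \SSS_{n+k}$, we may apply Theorem~\ref{thm:ss-preimage-gamma} with $A = \SSS_{n,k}$ and with the ambient parameter $n+k$ in place of $n$. This yields
\[
h(x) = \sum_{m = 0}^{\lfloor (n+k-1)/2\rfloor} \frac{\bigl|\{\sigma \in s^{-1}(\SSS_{n,k}) : \text{peak}(\sigma) = m\}\bigr|}{2^{n+k-1-2m}} \, x^m (1+x)^{n+k-1-2m},
\]
which is an expansion of $h(x)$ in the desired basis with coefficients that are manifestly nonnegative (each is a cardinality divided by a positive power of $2$). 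By the uniqueness noted in the previous step, these coefficients are exactly $\gamma_m(\AAA(A_{n,k}))$, so the $\gamma$-vector is nonnegative.

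I do not expect any genuine obstacle: the work is entirely done by Theorems~\ref{thm:A_n,k-h-vector} and~\ref{thm:ss-preimage-gamma}. The only point requiring a moment's care is matching the degree of the gamma basis, namely confirming that Br\"and\'en's formula applied to a subset of $\SSS_{n+k}$ produces the basis $x^m(1+x)^{n+k-1-2m}$ that matches the dimension $n+k-1$ of $\AAA(A_{n,k})$; once this is observed, the corollary follows.
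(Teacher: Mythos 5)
Your proposal is correct and is exactly the argument the paper intends: combine Theorem~\ref{thm:A_n,k-h-vector} with Br\"and\'en's Theorem~\ref{thm:ss-preimage-gamma} applied to $A = \SSS_{n,k} \subseteq \SSS_{n+k}$, and match the resulting expansion against the unique $\gamma$-basis expansion guaranteed by simplicity and Dehn--Sommerville. The parameter bookkeeping you spell out (degree $n+k-1$ matching $\dim \AAA(A_{n,k})$) is the only detail the paper leaves implicit.
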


    \begin{remark}
        Corollary \ref{cor:A_n,k-gamma-nonnegative} also follows from \cite[Theorem 11.6]{postnikov2006faces} since the lollipop graph $L_{n-1,k+1}$ is chordal with the labeling we used in this section. Theorem \ref{thm:ss-preimage-gamma} provides another proof of this.
    \end{remark}

\section{Real-rootedness}\label{sec:real-rootedness}

    In this section, we will prove real-rootedness of the $h$-polynomial of $\AAA(A_{n,2})$. We say a polynomial $a_0 + a_1x + \ldots + a_nx^n$ is real-rooted if all of its zeros are real. We say a sequence $(a_0,a_1,\ldots,a_n)$ is real-rooted if its generating function $a_0 + a_1x + \ldots + a_nx^n$ is real-rooted.

    Let $f$ and $g$ be real-rooted polynomials with positive leading coefficients and real roots $\{f_i\}$ and $\{g_i\}$, respectively. We say that $f$ \textit{interlaces} $g$ if
    \[ g_1 \leq f_1 \leq g_2 \leq f_2 \leq \ldots \leq f_{d-1} \leq g_d \]
    where $d = \deg g = \deg f + 1$. We say that $f$ \textit{alternates left of} $g$ if
    \[ f_1 \leq g_1 \leq f_2 \leq g_2 \leq \ldots \leq f_d \leq g_d \]
    where $d = \deg g = \deg f$. Finally, we say $f$ \textit{interleaves} $g$, denoted $f \ll g$, if $f$ either interlaces or alternates left of $g$.

    A classic example of real-rooted polynomials is Narayana polynomials. Recall that the Narayana polynomial $N_n(x)$ is defined by
    \[ N_n(x) = \sum_{i = 0}^{n-1} a_ix^i \]
    where $a_i$ counts the number of permutations in $s^{-1}(12\ldots n)$ with exactly $i$ descents. In other words, $N_n(x)$ is the $h$-polynomial of $\AAA(A_{n,0})$ and $\AAA(A_{n-1,1})$. We have the following result.

    \begin{thm}[\cite{branden2006linear}]\label{thm:N_m-real-rooted}
        For all $n$, $N_n(x)$ is real-rooted. Furthermore, $N_{n-1}(x) \ll N_n(x)$.
    \end{thm}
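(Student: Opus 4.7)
The plan is to prove both assertions simultaneously by induction on $n$, using a three-term recurrence for the Narayana polynomials together with a classical interlacing preservation lemma of the type used in the orthogonal polynomial literature.

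First, I would establish the three-term recurrence
\[
(n+1)\,N_n(x) \;=\; (2n-1)(1+x)\,N_{n-1}(x) \;-\; (n-2)(1-x)^2\,N_{n-2}(x),
\]
either by a Dyck-path decomposition at the last return to the diagonal (tracking peaks to control $x$) or by extracting coefficients from the quadratic functional equation $xt\,F^2 - (1-(1+x)t)\,F + 1 = 0$ satisfied by the generating function $F(x,t) = \sum_{n \geq 1} N_n(x)\,t^{n-1}$. The base cases $N_1(x) = 1$, $N_2(x) = 1+x$, and $N_3(x) = 1 + 3x + x^2$ are real-rooted and pairwise interleave by inspection.

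For the inductive step, I would invoke an interlacing preservation lemma of the following form: if $f, g$ are real-rooted polynomials with positive leading coefficients, $f \ll g$, $\alpha(x) = \alpha_1 x + \alpha_0$ satisfies $\alpha_0, \alpha_1 > 0$, and $\beta \geq 0$, then
\[
h(x) \;=\; \alpha(x)\,g(x) \;-\; \beta (1-x)^2\,f(x)
\]
is real-rooted and $g \ll h$. The standard verification proceeds by sign-tracking at the roots of $g$: at such a root $\theta$, one has $h(\theta) = -\beta(1-\theta)^2 f(\theta)$, whose sign is opposite to that of $f(\theta)$, and these signs alternate as $\theta$ runs over the roots of $g$ by the interlacing hypothesis $f \ll g$. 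This forces a root of $h$ strictly between each pair of consecutive roots of $g$; the remaining root is supplied by the leading behavior as $x \to \pm\infty$, and a degree count confirms $g \ll h$.

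The main obstacle will be the interlacing lemma itself, which requires genuine care rather than bookkeeping. The delicate point is that the factor $(1-x)^2$ vanishes at $x = 1$, so the sign-alternation argument could fail if $g$ had a root at or very close to $1$; in the Narayana case one checks $N_{n-1}(1) = C_n > 0$, so $1$ is never a root, but in the general lemma this degeneracy must be explicitly excluded or handled by a limiting argument. One must also rule out common roots of $N_{n-1}$ and $N_{n-2}$, since otherwise $h$ could vanish jointly with $g$ and destroy the strict alternation; this is ensured by maintaining strict interlacing $N_{n-2} \prec N_{n-1}$ throughout the induction, which in turn follows because the sign changes produced by the lemma are strict away from $x = 1$.
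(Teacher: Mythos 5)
The paper does not prove this statement at all: it is quoted from Br\"and\'en \cite{branden2006linear} as a known fact, so there is no internal argument to compare against. Your self-contained proof is essentially correct and follows the classical route: the three-term recurrence
\[
(n+1)\,N_n(x) = (2n-1)(1+x)\,N_{n-1}(x) - (n-2)(1-x)^2\,N_{n-2}(x)
\]
is a true identity (it checks out for small $n$ and is the standard Jacobi-type recurrence for Narayana polynomials), and the sign-tracking argument at the roots of $N_{n-1}$ correctly yields $n-3$ interior roots plus two outer roots of $N_n$, hence real-rootedness and the interlacing $N_{n-1}\ll N_n$. What you gain over the paper is an elementary, fully explicit proof; what Br\"and\'en's machinery buys is generality (his results cover descent polynomials of $s^{-1}(A)$ for arbitrary $A$, not just the identity). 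A few points in your writeup need repair. The functional equation should read $xt^2F^2-(1-(1+x)t)F+1=0$ (equivalently $y=t(1+y)(1+xy)$ for $y=tF$); as written it fails already at order $t$. The ``last return to the diagonal'' decomposition produces the quadratic convolution recurrence, not the linear three-term one, so the generating-function (or Jacobi-polynomial) route is the one you should actually carry out. Your general lemma is false without a hypothesis forcing $\deg h=\deg g+1$ with positive leading coefficient; you must require $\deg f=\deg g-1$ and $\alpha_1\,\mathrm{lc}(g)>\beta\,\mathrm{lc}(f)$, which holds here since $(2n-1)-(n-2)=n+1>0$ (and note that in the alternates-left-of case the degree count breaks entirely, so the induction must maintain genuine interlacing, which it does). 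Finally, $N_{n-1}(1)=C_{n-1}$, not $C_n$, but the cleaner observation is that $N_{n-1}$ has positive coefficients, so all its roots are negative and the factor $(1-x)^2$ never vanishes at a root of $g$.
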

       
    To prove real-rootedness of the $h$-polynomial of $\AAA(A_{n,2})$, we will need the following ``happy coincidence''.

    \begin{prop}\label{prop:happy-coincidence}
        The number of permutations in $s^{-1}(2134\ldots n)$ with exactly $i$ descents is the same as the number of permutations $w$ in $s^{-1}(1234\ldots n)$ with exactly $i$ descents such that $w_1, w_n < n$.
    \end{prop}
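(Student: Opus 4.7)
The plan is to translate both sides of the equality into statements about decreasing binary trees via Proposition~\ref{prop:stacksorting-postorder}, use a label swap to reduce to trees with postorder $12\ldots n$, and then finish by complementary counting against $N_n(x)$. For $\pi \in \SSS_n$, set $T = \mathcal{I}^{-1}(\pi)$, so $\pi \in s^{-1}(\sigma)$ iff $\mathcal{P}(T) = \sigma$ and $\des(\pi)$ equals the number of right edges of $T$. Since $\pi_1 < n$ iff the root of $T$ has a left child and $\pi_n < n$ iff it has a right child, the proposition becomes the following assertion about decreasing binary trees on $[n]$: the number with postorder $2134\ldots n$ and $i$ right edges equals the number with postorder $12\ldots n$, root $n$ having both children, and $i$ right edges.

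The first step is to introduce the label-swap map $\phi$ that exchanges the labels $1$ and $2$ in a tree. For any $T$ with $\mathcal{P}(T) = 2134\ldots n$, node $2$ is the first node in postorder and therefore a leaf (a postorder-first node is always a leaf); in particular the parent of $1$ cannot be $2$, so both $1$'s and $2$'s parents (when present) carry labels $\geq 3$. These observations imply that $\phi(T)$ is still a valid decreasing binary tree; its postorder is $12\ldots n$, and the node now carrying label $2$ (formerly the node labeled $1$) is again a leaf. Thus $\phi$ is a descent-preserving involution between $\{T : \mathcal{P}(T) = 2134\ldots n\}$ and $\mathcal{A}_n := \{T' : \mathcal{P}(T') = 12\ldots n,\ 2 \text{ is a leaf}\}$, since the underlying tree structure is untouched by the relabeling.

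The second step is to identify the generating function of $\mathcal{A}_n$ with that of $\mathcal{R}_n := \{T' : \mathcal{P}(T') = 12\ldots n,\ \text{root has both children}\}$. In any tree with postorder $12\ldots n$, node $2$ is either a leaf or has $1$ as its unique child (the only label smaller than $2$), so contracting $\{1,2\}$ to a single node labeled $2$ and relabeling $k \mapsto k-1$ for $k \geq 3$ gives a $2$-to-$1$ correspondence with decreasing binary trees on $[n-1]$ of postorder $12\ldots(n-1)$; the factor of $2$ records whether $1$ is the left or right child of $2$, and these cases contribute $1$ and $x$ to the right-edge count, for a total of $(1+x)N_{n-1}(x)$. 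By complementary counting, the generating function of $\mathcal{A}_n$ is $N_n(x) - (1+x)N_{n-1}(x)$. The parallel root decomposition $N_n(x) = xN_{n-1}(x) + N_{n-1}(x) + \tilde N_n(x)$ (according to whether the root has only a right child, only a left child, or both) yields the same formula for the $\mathcal{R}_n$ generating function $\tilde N_n(x)$, which finishes the proof. The only delicate point is verifying that $\phi$ preserves the decreasing property, which reduces to the two parent-label observations above; the rest of the argument is routine generating-function bookkeeping.
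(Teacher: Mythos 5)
Your argument is correct, but it takes a genuinely different route from the paper's. The paper proves the proposition by a single explicit right-edge-preserving bijection $\varphi$ between the two sets of decreasing binary trees: it locates the smallest ancestor $v$ of node $1$ having two children, detaches the chain of nodes hanging below $v-1$, transplants a mirrored chain above the root, and relabels. You instead pass through an intermediate set: the label swap $1 \leftrightarrow 2$ (legitimate because both nodes are leaves with parents labelled $\ge 3$) identifies trees with postorder $2134\ldots n$ with trees with postorder $12\ldots n$ in which node $2$ is a leaf, and you then show that this set and the set of trees with postorder $12\ldots n$ whose root has two children are equidistributed by right edges --- not via a bijection, but by computing both generating functions to equal $N_n(x)-(1+x)N_{n-1}(x)$ through two elementary decompositions (deleting the leaf $1$ from below node $2$, and deleting the root $n$). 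Both proofs are sound; the paper's has the virtue of an explicit bijection, while yours is shorter, avoids the chain-transplant bookkeeping, and has the bonus of directly establishing the identity $P_n(x)=N_n(x)-(1+x)N_{n-1}(x)$ that Proposition \ref{prop:A_2-recurrence} later extracts from this proposition by a separate appending argument. One cosmetic slip: after deleting node $1$ you should relabel $k\mapsto k-1$ for all $k\ge 2$ (or call the contracted node $1$), since as written two nodes would carry the label $2$; this does not affect the substance of the argument.
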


    \begin{proof}
        Let 
        \[\mathcal{T}_1 = \{T~|~\mathcal{I}(T) = w \in s^{-1}(1234\ldots n), w_1,w_n < n \} \]
        and
        \[\mathcal{T}_2 = \{T~|~\mathcal{I}(T) = w \in s^{-1}(2134\ldots n) \}. \]
        We will construct a bijection between $\mathcal{T}_1$ and $\mathcal{T}_2$ that preserves the number of right edges.

        For two nodes $v_1,v_2$ in a binary tree $T$, we say $v_1 \rightarrow_R v_2$ (resp. $\rightarrow_L$) if $v_1$ is the right (resp. left) child of $v_2$. Our bijection $\varphi$ is constructed as follows.

        Given $T \in \mathcal{T}_1$, let $v$ be the smallest ancestor of node $1$ that has two children. Then, we must have a chain $1 \rightarrow_{D_1} 2 \rightarrow_{D_2} \ldots \rightarrow_{D_{v-1}} v$ where each $D_i$ is either $R$ or $L$, and each node $2,3,\ldots,v-1$ has exactly one child. Furthermore, since $\mathcal{I}(T) = 1234\ldots n$ and $v$ has two children, $1$ has to be in the left-subtree of $v$, so $D_{v-1} = L$. Then, $\varphi(T) \in \mathcal{T}_2$ is constructed as follows.

        \begin{enumerate}
            \item Remove all nodes below $v-1$.
            \item The root of $T$ has to be $n$, add the follow edges: $n \rightarrow_{D_{v-2}} n+1 \rightarrow_{D_{v-3}} \ldots \rightarrow_{D_1} n+v-2$.
            \item Relabel the nodes such that the postorder reading word is $2134\ldots n$.
        \end{enumerate}

        Clearly, $\varphi$ preserves the number of right edges: we simply move the right edges from below $v-1$ to above $n$. Also, since $\mathcal{I}(T) = w$ has $w_1, w_n < n$, the root $n$ of $T$ must have two children. Hence, the node $v$ above always exists. Finally, after step 2, the first two nodes in postorder are $v-1$ and the right child of $v$. These two nodes are incomparable, so we can always relabel the nodes in step 3. Therefore, $\varphi$ is well-defined.

        We can also construct the inverse map as follows. Given $T' \in \mathcal{T}_2$, let $v$ be the largest descendant of the root $n$ that has two children. Then, we must have a chain $v \rightarrow_{D_v} v+1 \rightarrow_{D_{v+1}} \ldots \rightarrow_{D_{n-1}} n$ where each $D_i$ is either $R$ or $L$, and each node $v+1,v+2,\ldots,n$ has exactly one child. Furthermore, since $T'\in\mathcal{T}_2$, the nodes $1$ and $2$ in $T'$ are incomparable, so $v \geq 3$. Then, $\varphi^{-1}(T') \in \mathcal{T}_1$ is constructed as follows.

        \begin{enumerate}
            \item Remove all nodes above $v$.
            \item Node $2$ is not above $v$, so it is not removed. Add the follow edges: $n \rightarrow_{D_{n-1}} n-1 \rightarrow_{D_{n-2}} \ldots \rightarrow_{D_{v+1}} v+1 \rightarrow_{D_v} 2$.
            \item Relabel the nodes such that the postorder reading word is $1234\ldots n$.
        \end{enumerate}

        In $T'$, observe that $1$ and $2$ are incomparable. Thus, at least one node in $T'$ must have two children. Hence, the node $v$ above always exists, so $\varphi^{-1}$ is well-defined. One can easily check that $\varphi^{-1}$ is indeed the inverse map of $\varphi$. This completes the proof.
    \end{proof}

    An example of the map $\varphi$ above can be seen in Figure \ref{fig:varphi-ex}.

    \begin{figure}[h!]
        \centering
        \includegraphics{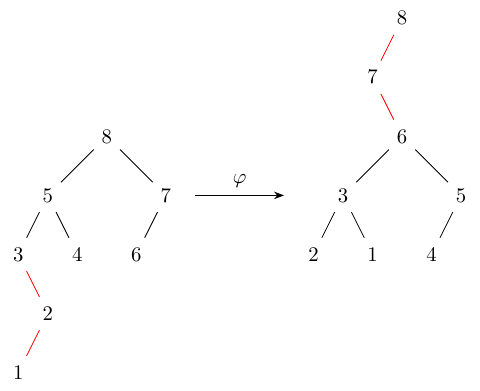}
        \caption{Example of the map $\varphi$}
        \label{fig:varphi-ex}
    \end{figure}

    Proposition \ref{prop:happy-coincidence} gives the following important recurrence.

    \begin{prop}\label{prop:A_2-recurrence}
        Let $H_{n}(x)$ be the $h$-polynomial of $\AAA(A_{n,2})$, and recall that $N_{n+2}(x)$ and $N_{n+1}(x)$ are the $h$-polynomials of $\AAA(A_{n+2,0})$ and $\AAA(A_{n+1,0})$, respectively. We have
        \[ H_{n}(x) = 2N_{n+2}(x) - (1+x)N_{n+1}(x). \]
    \end{prop}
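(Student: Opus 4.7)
The plan is to combine Theorem~\ref{thm:A_n,k-h-vector}, Proposition~\ref{prop:happy-coincidence}, and a straightforward root-position decomposition of stack-sortable permutations via decreasing binary trees.

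First, observe that $\SSS_{n,2}$ consists of exactly two permutations: $123\cdots(n+2)$ and $2134\cdots(n+2)$. By Theorem~\ref{thm:A_n,k-h-vector},
\[
H_n(x) \;=\; \sum_{\sigma\in s^{-1}(123\cdots(n+2))} x^{\des(\sigma)} \;+\; \sum_{\sigma\in s^{-1}(2134\cdots(n+2))} x^{\des(\sigma)} \;=\; N_{n+2}(x) + P(x),
\]
where $P(x)$ denotes the second sum. Applying Proposition~\ref{prop:happy-coincidence} (with $n$ replaced by $n+2$), $P(x)$ equals the generating function by descents of the set $\{w\in s^{-1}(123\cdots(n+2)) : w_1,w_{n+2}<n+2\}$.

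Next, I would partition $s^{-1}(123\cdots(n+2))$ by where $n+2$ sits, using the bijection $T\leftrightarrow w=\mathcal{I}(T)$ with decreasing binary trees. Since postorder reading of $T$ is the identity (Proposition~\ref{prop:stacksorting-postorder}), the root of $T$ is necessarily $n+2$. Then $w_1=n+2$ exactly when the root has no left child, and $w_{n+2}=n+2$ exactly when the root has no right child. In the first case, $T$ consists of the root together with a right subtree $T'$ on nodes $\{1,\dots,n+1\}$ whose postorder is $12\cdots(n+1)$; the only right edge added by the root accounts for an extra descent, so this family contributes $x\,N_{n+1}(x)$. In the second case, the analogous decomposition yields $N_{n+1}(x)$. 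Summing the three disjoint classes gives
\[
N_{n+2}(x) \;=\; x\,N_{n+1}(x) + N_{n+1}(x) + P(x),
\]
hence $P(x) = N_{n+2}(x) - (1+x)N_{n+1}(x)$, and therefore
\[
H_n(x) \;=\; N_{n+2}(x) + P(x) \;=\; 2N_{n+2}(x) - (1+x)N_{n+1}(x).
\]

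There is no serious obstacle here: the decreasing-binary-tree decomposition by root-children is standard, and the only subtle point is the careful bookkeeping of descents as right edges (one gained in the left-child-only case, none in the right-child-only case). The nontrivial content has already been isolated in Proposition~\ref{prop:happy-coincidence}, which is exactly the identity needed to convert the $2134\cdots(n+2)$-preimage into the complement of the two boundary cases inside the $12\cdots(n+2)$-preimage.
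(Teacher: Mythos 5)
Your proposal is correct and follows essentially the same route as the paper: decompose $H_n$ as $N_{n+2}+P$ over the two elements of $\SSS_{n,2}$, apply Proposition~\ref{prop:happy-coincidence}, and account for the two boundary classes $w_1=n+2$ and $w_{n+2}=n+2$ inside $s^{-1}(12\cdots(n+2))$, which contribute $xN_{n+1}(x)$ and $N_{n+1}(x)$ respectively. The paper phrases those two classes directly as prepend/append bijections with $s^{-1}(12\cdots(n+1))$ rather than via the root of the decreasing binary tree, but this is the same bijection in different clothing.
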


    \begin{proof}
        Recall that $H_{n}(x)$, which is the $h$-polynomial of $\AAA(A_{n,2})$, is the generating function counting descents in $s^{-1}(123\ldots n+2, 213\ldots n+2)$. Thus, we can write
        \[ H_n(x) = N_{n+2}(x) + P_{n+2}(x) \]
        where $N_{n+2}(x)$ counts descents in $s^{-1}(123\ldots n+2)$ and $P_{n+2}(x)$ counts descents in $s^{-1}(213\ldots n+2)$.

        Proposition \ref{prop:happy-coincidence} says that the generating function counting descents in $s^{-1}(213\ldots n+2)$ is exactly the same as that counting descents in
        \[ \{w~|~w\in \text{ss}_{n+2}\} ~\backslash  \left( \{w~|~w\in \text{ss}_{n+2}, w_1 = n+2\} \cup \{w~|~w\in \text{ss}_{n+2}, w_{n+2} = n+2\} \right), \]
        where $\text{ss}_{n+2} = s^{-1}(123\ldots n+2)$. The generating function counting descents in $\{w~|~w\in \text{ss}_{n+2}\}$ is exactly $N_{n+2}(x)$. On the other hand, there is a natural descent-preserving bijection between $s^{-1}(123\ldots n+1)$ and $\{w~|~w\in \text{ss}_{n+2}, w_{n+2} = n+2\}$ by appending $n+2$ to the end of the elements in $s^{-1}(123\ldots n+1)$. Thus, the generating function counting descents in $\{w~|~w\in \text{ss}_{n+2}, w_{n+2} = n+2\}$ is $N_{n+1}(x)$. Similarly, there is a natural bijection between $s^{-1}(123\ldots n+1)$ and $\{w~|~w\in \text{ss}_{n+2}, w_1 = n+2\}$ by appending $n+2$ to the beginning of the elements in $s^{-1}(123\ldots n+1)$. This bijection adds an extra descent at the beginning of every element. Thus, the generating function counting descents in $\{w~|~w\in \text{ss}_{n+2}, w_1 = n+2\}$ is $xN_{n+1}(x)$.

        Hence,
        \[ P_{n+2}(x) = N_{n+2}(x) - (1+x)N_{n+1}(x). \]
        Therefore,
        \[ H_n(x) = N_{n+2}(x) + P_{n+2}(x) = 2N_{n+2}(x) - (1+x)N_{n+1}(x). \]
        This completes the proof.
    \end{proof}

    From the recurrence in Proposition \ref{prop:A_2-recurrence}, we have the following theorem.

    \begin{thm}\label{thm:A_2-real-rooted}
        Let $H_n(x)$ be the $h$-polynomial of $\AAA(A_{n,2})$. Then, $H_n(x)$ is real-rooted.
    \end{thm}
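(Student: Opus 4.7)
The plan is to combine the recurrence $H_n(x) = 2N_{n+2}(x) - (1+x)N_{n+1}(x)$ from Proposition \ref{prop:A_2-recurrence} with Theorem \ref{thm:N_m-real-rooted} on Narayana polynomials, and then use an elementary sign-change count at the zeros of $N_{n+1}$. First I would record that $\deg H_n = n+1$ with leading coefficient $1$: this is automatic because $H_n$ is the $h$-polynomial of the simple $(n+1)$-dimensional polytope $\AAA(A_{n,2})$, so Dehn--Sommerville gives $h_{n+1} = h_0 = 1$. The case $n = 0$ is immediate from $H_0(x) = 1 + x$, so assume $n \geq 1$. Let $f_1 < f_2 < \cdots < f_n$ denote the roots of $N_{n+1}$ and $g_1 < g_2 < \cdots < g_{n+1}$ the roots of $N_{n+2}$. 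Theorem \ref{thm:N_m-real-rooted} gives the weak interleaving $N_{n+1} \ll N_{n+2}$, which I would strengthen to strict interlacing
\[ g_1 < f_1 < g_2 < f_2 < \cdots < f_n < g_{n+1} \]
using the classical fact that Narayana polynomials have simple roots and that two consecutive members of an orthogonal-like family cannot share a root.

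The heart of the argument is a sign computation. At $x = f_i$ the second summand of the recurrence vanishes, so
\[ H_n(f_i) = 2 N_{n+2}(f_i) = 2\prod_{j=1}^{n+1}(f_i - g_j), \]
and exactly $n+1-i$ of the factors are negative by the strict interlacing. Hence $\operatorname{sign}(H_n(f_i)) = (-1)^{n+1-i}$, so the values $H_n(f_1), \ldots, H_n(f_n)$ strictly alternate in sign. The intermediate value theorem therefore produces $n-1$ distinct real roots of $H_n$, one lying in each open interval $(f_i, f_{i+1})$ for $i = 1, \ldots, n-1$.

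To recover the remaining two roots I would compare signs at the extremes. On the right, $H_n(f_n) < 0$ while $H_n(x) \to +\infty$ as $x \to +\infty$, producing a root in $(f_n, +\infty)$. On the left, $\operatorname{sign}(H_n(f_1)) = (-1)^n$ whereas $H_n$ has sign $(-1)^{n+1}$ as $x \to -\infty$ (degree $n+1$, positive leading coefficient); these disagree in every parity, so a root lies in $(-\infty, f_1)$. Altogether this exhibits at least $n+1$ real roots, matching $\deg H_n$, so $H_n$ is real-rooted. The only delicate point — really the main obstacle — is the strictness of the Narayana interlacing, since Theorem \ref{thm:N_m-real-rooted} is stated only in the weak form $\ll$; I would either cite this directly from \cite{branden2006linear} or deduce it from the fact that a common root of $N_{n+1}$ and $N_{n+2}$ would contradict the classical structure of the Narayana recurrence.
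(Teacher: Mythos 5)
Your proof is correct, and it follows the same overall strategy as the paper --- combine the recurrence $H_n(x) = 2N_{n+2}(x) - (1+x)N_{n+1}(x)$ with the interlacing of consecutive Narayana polynomials and count sign changes via the intermediate value theorem --- but with one genuinely different and consequential choice: you evaluate $H_n$ at the roots $f_i$ of $N_{n+1}$, where the term carrying the factor $(1+x)$ vanishes, so that $\operatorname{sign}(H_n(f_i)) = \operatorname{sign}(N_{n+2}(f_i)) = (-1)^{n+1-i}$ alternates perfectly. The paper instead evaluates at the roots $g_i$ of $N_{n+2}$, so the sign of $H_n(g_i)$ is governed by $-(1+x)N_{n+1}(x)$; the extra linear factor breaks the alternation at $x=-1$, forcing a case split on the parity of $n$ (is $-1$ a root of $N_{n+1}$ or not?) and, in the even case, requiring the additional observation that $x=-1$ is itself a root of $H_n$ (the paper's ``1 is another root'' is a typo for $-1$). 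Your choice of evaluation points eliminates the case analysis entirely and yields a uniform count of $n+1$ distinct real roots. One remark: both arguments need the \emph{strict} interlacing $g_1 < f_1 < g_2 < \cdots < f_n < g_{n+1}$, whereas Theorem \ref{thm:N_m-real-rooted} is stated only with $\ll$; the paper uses strictness silently, while you correctly flag it as the point requiring an external citation or a separate argument that $N_{n+1}$ and $N_{n+2}$ share no root. Your handling of the degree and leading coefficient of $H_n$ (via Dehn--Sommerville, or equivalently via the cancellation of the two monic degree-$(n+1)$ terms leaving leading coefficient $2-1=1$) and of the base case $n=0$ is also fine.
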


    \begin{proof}
        We have two cases: $n$ is even and $n$ is odd.

        \textbf{Case 1:} $n$ is even.

        In this case, $N_{n+1}(x)$ has $n$ roots $f_1 < f_2 < \ldots < f_n < 0$. Since $n$ is even, $-1$ is not a root of $N_{n+1}(x)$. Let $k$ be the index such that $f_k < -1 < f_{k+1}$. $N_{n+1}(x)$ is a monic polynomial of even degree, so the sign of $N_{n+1}(x)$ alternates in the intervals $(-\infty , f_1)$, $(f_1,f_2),\ldots,(f_{n-1}, f_n)$, and $(f_n, \infty)$ with $N_{n+1}(x) > 0$ in $(-\infty, f_1)$ and $(f_n, \infty)$. 
        Thus, the sign of $-(1+x)N_{n+1}(x)$ alternates in the intervals $(-\infty , f_1)$, $(f_1,f_2),\ldots, (f_k,-1)$, $(-1,f_{k+1}),\ldots,(f_{n-1}, f_n)$, and $(f_n, \infty)$ with $N_{n+1}(x) > 0$ in $(-\infty, f_1)$ and $N_{n+1}(x) < 0$ in $(f_n, \infty)$.

        Recall that $N_{n+2}(x)$ has a root in each of the intervals $(-\infty , f_1)$, $(f_1,f_2),\ldots,(f_{n-1}, f_n)$, and $(f_n, \infty)$. Let the roots of $N_{n+2}(x)$ be $g_1<g_2<\ldots<g_{n+1} < 0$. Then, at each $g_i$, the sign of $H_{n}(x) = 2N_{n+2}(x) - (1+x)N_{n+1}(x)$ is the same as the sign of $- (1+x)N_{n+1}(x)$. Therefore, the sign of $H_n(x)$ alternates at $g_1,\ldots,g_{k-1}$ and also at $g_{k+1},\ldots,g_{n+1}$. Thus, $H_n(x)$ has a root in each of the intervals $(g_1,g_2),\ldots,(g_{k-2},g_{k-1})$ and $(g_{k+1}, g_{k+2}), \ldots, (g_n,g_{n+1})$. Furthermore, since $H_n(x)$ is a monic polynomial of odd degree, $H_n(x) > 0$ as $x\rightarrow\infty$. However, $H_n(g_{n+1}) < 0$, so $H_n(x)$ has another root in $(g_{n+1},\infty)$. Similarly, $H_n(x)$ has another root in $(-\infty,g_{1})$. Finally, 1 is another root of $H_n(x)$ since 1 is a root of both $N_{n+2}(x)$ and $-(1+x)N_{n+1}(x)$. Hence, $H_n(x)$ has exactly $n+1$ real roots, so $H_n(x)$ is real-rooted.

        \textbf{Case 2:} $n$ is odd.

        In this case, $N_{n+1}(x)$ has $n$ roots $f_1 < f_2 < \ldots < f_n < 0$. Since $n$ is odd, $-1$ is a root of $N_{n+1}(x)$. Let $k$ be the index such that $f_k = -1$. $N_{n+1}(x)$ is a monic polynomial of even degree, so the sign of $N_{n+1}(x)$ alternates in the intervals $(-\infty , f_1)$, $(f_1,f_2),\ldots,(f_{n-1}, f_n)$, and $(f_n, \infty)$ with $N_{n+1}(x) < 0$ in $(-\infty, f_1)$ and $N_{n+1}(x) > 0$ in $(f_n, \infty)$. 
        Thus, the sign of $-(1+x)N_{n+1}(x)$ alternates in the intervals $(-\infty , f_1)$, $(f_1,f_2),\ldots, (f_{k-1},f_{k+1}),\ldots,(f_{n-1}, f_n)$, and $(f_n, \infty)$ with $N_{n+1}(x) < 0$ in $(-\infty, f_1)$ and $(f_n, \infty)$.

        Again, $N_{n+2}(x)$ has a root in each of the intervals $(-\infty , f_1)$, $(f_1,f_2),\ldots,(f_{n-1}, f_n)$, and $(f_n, \infty)$. Let the roots of $N_{n+2}(x)$ be $g_1<g_2<\ldots<g_{n+1} < 0$. Then, at each $g_i$, the sign of $H_{n}(x) = 2N_{n+2}(x) - (1+x)N_{n+1}(x)$ is the same as the sign of $- (1+x)N_{n+1}(x)$. Therefore, the sign of $H_n(x)$ alternates at $g_1,\ldots,g_{k}$ and also at $g_{k+1},\ldots,g_{n+1}$. Thus, $H_n(x)$ has a root in each of the intervals $(g_1,g_2),\ldots,(g_{k-1},g_{k})$ and $(g_{k+1}, g_{k+2}), \ldots, (g_n,g_{n+1})$. Furthermore, since $H_n(x)$ is a monic polynomial of even degree, $H_n(x) > 0$ as $x\rightarrow\infty$. However, $H_n(g_{n+1}) < 0$, so $H_n(x)$ has another root in $(g_{n+1},\infty)$. Similarly, $H_n(x)$ has another root in $(-\infty,g_{1})$. Hence, $H_n(x)$ has exactly $n+1$ real roots, so $H_n(x)$ is real-rooted.
    \end{proof}

\section{Further directions}

    We conjecture that the recurrence in Proposition \ref{prop:A_2-recurrence} can be generalized to any poset. Recall that for a poset $P$, a subposet $S$ of $P$ is called \textit{autonomous} if for every element $p \in P \backslash S$, either
    \begin{itemize}
        \item $p > s$ for all $s\in S$, or
        \item $p < s$ for all $s\in S$, or
        \item $p ~||~ s$ for all $s\in S$.
    \end{itemize}

    Our conjecture is as follows.

    \begin{conjecture}\label{con:recurrence}
        Let $P$ be a poset with an autonomous subposet $S$ that is a chain of size $2$, i.e. $S = C_2$. Let $P_1$ be the poset obtained from $P$ by replacing $S$ by a singleton. Let $P_2$ be the poset obtained from $P$ by replacing $S$ by an antichain of size $2$, i.e. $A_2$. Let $h_{P}(x)$, $h_{P_1}(x)$, $h_{P_2}(x)$ be the $h$-polynomials of $\AAA(P)$, $\AAA(P_1)$, $\AAA(P_2)$, respectively. Then,
        \[ h_{P_2}(x) + (1+x)h_{P_1}(x) = 2h_{P}(x). \]
        As a result, let $\gamma_{P}(x)$, $\gamma_{P_1}(x)$, $\gamma_{P_2}(x)$ be the $\gamma$-polynomials of $\AAA(P)$, $\AAA(P_1)$, $\AAA(P_2)$, respectively. Then,
        \[ \gamma_{P_2}(x) + \gamma_{P_1}(x) = 2\gamma_{P}(x). \]
    \end{conjecture}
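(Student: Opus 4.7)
The plan is to reduce Conjecture~\ref{con:recurrence} to the $h$-polynomial identity, from which the $\gamma$-identity follows immediately. With $d = \dim\AAA(P) = \dim\AAA(P_2) = \dim\AAA(P_1)+1$, the factor $(1+x)$ multiplied into $h_{P_1}(x)$ raises the exponent on $(1+x)$ in each term of its $\gamma$-expansion from $d-1-2i$ to $d-2i$, placing all three polynomials in the common basis $\{x^i(1+x)^{d-2i}\}$. Uniqueness of the $\gamma$-expansion then converts the $h$-identity $h_{P_2}+(1+x)h_{P_1} = 2h_P$ into $\gamma^{P_2}_i + \gamma^{P_1}_i = 2\gamma^P_i$ coefficient-by-coefficient.

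For the $h$-identity itself, I would pass to $f$-polynomials via $h_Q(x) = f_Q(x-1)$ and prove the equivalent statement $f_{P_2}(y) + (y+1)f_{P_1}(y) = 2f_P(y)$, a weighted count of tubings by codimension. The key structural input is that $\{a,b\}$ is itself a proper tube of $P$, while $\{a',b'\}$ is not a tube of $P_2$ (it is disconnected in the Hasse diagram of $P_2$). I would first establish a bijection between tubings of $P$ containing $\{a,b\}$ and tubings of $P_1$ via the contraction $\{a,b\}\mapsto\{s\}$, which decreases the tube count by $1$ and shifts codimension appropriately. For the remaining tubings of $P$, the strategy is to pair them with tubings of $P_2$ in which some tube contains both $a'$ and $b'$: any such tube of $P$ must strictly contain $\{a,b\}$ (since $\{a,b\}$ itself is excluded), and connectedness then forces it to include elements above or below $S$, which provide in $P_2$ the ``bridging'' required to make $\{a',b'\}$ sit inside a tube. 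Finally, the tubings of $P_2$ in which $a'$ and $b'$ occupy different tubes (or in which only one of them appears in any tube) should correspond to tubings of $P_1$ equipped with a binary marker (an ``$a$-side'' vs.\ ``$b$-side'' choice), contributing the $(y+1)$ factor.

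The main obstacle will be the detailed verification of the last correspondence. The difficulty is that $P_2$ admits tubes with no direct analog in $P$: for example, $\{a'\}\cup U$ with $U$ consisting of elements above $b$ in $P$ is a tube of $P_2$ (since $a'$ and $b'$ are incomparable in $P_2$, so $b'$ need not lie in the convex hull of $a'$ and $U$), whereas $\{a\}\cup U$ is not convex in $P$ (convexity forces $b\in U$). Matching such ``extra'' tubes of $P_2$ against the doubled family of tubings of $P_1$, while preserving nested/disjoint compatibility and the acyclicity condition of Definition~\ref{def:poset_def}, will be the crux. I expect this to require a genuine analog, for a general poset with an autonomous $C_2$, of the ``happy coincidence'' bijection in Proposition~\ref{prop:happy-coincidence} --- which, in the special case $P = A_{n+2,0}$ (so $P_2 = A_{n,2}$), handled the corresponding combinatorial matching via a binary-tree manipulation that moved a chain of vertices from below the node $1$ to above the root $n$.
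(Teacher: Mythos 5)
This statement is Conjecture~\ref{con:recurrence}; the paper offers no proof of it in this generality, only of the special case $P = C_{n+3}$ (so $P_1 = C_{n+2}$ and $P_2 = A_{n,2}$), which is Proposition~\ref{prop:A_2-recurrence} and is proved not through tubings at all but through the stack-sorting model of the $h$-vector (Theorem~\ref{thm:A_n,k-h-vector}) together with the bijection of Proposition~\ref{prop:happy-coincidence}. Your reduction of the $\gamma$-identity to the $h$-identity is correct and is exactly what the paper intends by ``as a result.'' But your argument for the $h$-identity is a plan, not a proof, and you say so yourself: the pairing between tubings of $P$ not containing the tube $\{a,b\}$ and tubings of $P_2$ in which $a'$ and $b'$ share a tube, and especially the matching of the remaining tubings of $P_2$ against a doubled/marked family of tubings of $P_1$, is precisely where all the content lies, and it is left unconstructed. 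Your own example of the ``extra'' tubes $\{a'\}\cup U$ shows why this is delicate; until that correspondence is built and shown to respect nestedness and the acyclic condition of Definition~\ref{def:poset_def}, there is no proof.

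There is also a concrete algebraic slip in the translation to $f$-polynomials. Since $f_Q(t) = h_Q(t+1)$, i.e.\ $h_Q(x) = f_Q(x-1)$, substituting $y = x-1$ into $h_{P_2}(x) + (1+x)h_{P_1}(x) = 2h_P(x)$ gives $f_{P_2}(y) + (y+2)f_{P_1}(y) = 2f_P(y)$, not $(y+1)f_{P_1}(y)$. This matters for your proposed interpretation: a bare binary marker on tubings of $P_1$ accounts only for a factor of $2$, and the leftover $y$ must be absorbed as a dimension shift (recall $\dim \AAA(P_1) = \dim\AAA(P) - 1$), so the bookkeeping of which tubings of $P_2$ receive the marker and which receive the extra dimension has to be redone. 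None of this is fatal to the strategy --- a tubing-level argument, if completed, would be strictly more general than anything in the paper, which only handles the chain case --- but as written the proposal has a genuine gap at its central step.
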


    Proving the recurrence in Conjecture \ref{con:recurrence} would be useful in proving real-rootedness of the $h$-polynomials, as shown in Theorem \ref{thm:A_2-real-rooted}. Furthermore, the resulting recurrence of the $\gamma$-polynomial would also be useful in proving $\gamma$-positivity. More generally, we have the following conjecture when $S$ is an antichain of size $n$.

    \begin{conjecture}\label{con:recurrence3}
        Let $P$ be a poset with an autonomous subposet $S$ that is a chain of size $n$, i.e. $S = C_3$. For $1\leq i \leq n$, let $P_i$ be the poset obtained from $P$ by replacing $S$ by an antichain of size $i$, i.e. $A_i$. Let $h_{P}(x)$, $h_{P_1}(x)$, $\ldots$, $h_{P_n}(x)$ be the $h$-polynomials of $\AAA(P)$, $\AAA(P_1)$, $\ldots$, $\AAA(P_n)$, respectively. Then,
        \[ \sum_{\substack{c = (c_1,\ldots,c_n) \\ \sum ic_i = n}} \dfrac{n!}{1^{c_1}c_1!\ldots n^{c_n}c_n!}B_{1}(x)^{c_1}\ldots B_{n}(x)^{c_n} h_{P_{\ell(c)}}(x) = n!\cdot h_{P}(x) \]
        where
        \[ B_k(x) = \sum_{i = 0}^{k-1}\binom{k-1}{i}^2x^i \]
        are type B Narayana polynomials and $\ell(c) = \sum_{i=1}^n c_n$.

        In particular, when $n=3$, we have
        \[ h_{P_3}(x) + 3(1+x)h_{P_2}(x) + 2(1+4x+x^2)h_{P_1}(x) = 6h_{P}(x). \]
    \end{conjecture}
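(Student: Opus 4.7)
The plan is to recognize the conjectured formula as a plethystic identity and then pursue a bijective proof. Since $\frac{n!}{\prod_i i^{c_i} c_i!}$ enumerates permutations $\sigma \in \SSS_n$ of cycle type $(c_1, c_2, \ldots)$, the identity is equivalent to
\[
n! \, h_P(x) \;=\; \sum_{\sigma \in \SSS_n} \Big(\prod_{\gamma \text{ cycle of } \sigma} B_{|\gamma|}(x)\Big)\, h_{P_{\ell(\sigma)}}(x),
\]
and the exponential formula recasts this as the compositional identity $h_P(x) = [z^n]\, H(A(z))$, where $H(w) = \sum_{\ell \ge 0} h_{P_\ell}(x)\, w^\ell/\ell!$ and $A(z) = \sum_{k \ge 1} B_k(x)\, z^k/k$. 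The aim is therefore a descent-preserving bijection between maximal tubings of $\AAA(P)$ and tuples $(T', \{\gamma_i\}, \{U_i\})$, where $T'$ is a maximal tubing of $\AAA(P_\ell)$, the $\gamma_i$ are the blocks of a partition of $[n]$ into $\ell$ cyclically-ordered classes (i.e., a cycle decomposition of some $\sigma \in \SSS_n$ with $\ell(\sigma) = \ell$), and each $U_i$ is a maximal tubing of the cyclohedron on $|\gamma_i|$ vertices, whose $h$-polynomial is precisely $B_{|\gamma_i|}(x)$.

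The first step is to classify how tubes of $P$ interact with $S = \{s_1 < \cdots < s_n\}$. Because $S$ is autonomous and a chain, the intersection $\tau \cap S$ of any tube with $S$ is an interval of $S$, and only $s_1$ and $s_n$ can be Hasse-adjacent to elements of $P \setminus S$. A short connectivity argument then shows every tube $\tau$ of $P$ falls in exactly one of five classes: (a) a proper sub-interval $[s_i, s_j] \subsetneq S$; (b) a \emph{lower} tube containing $[s_1, s_j]$ together with Hasse-connected elements below $S$; (c) an \emph{upper} tube containing $[s_i, s_n]$ together with elements above; (d) a tube containing $S$ in full along with elements of $P \setminus S$; or (e) a tube disjoint from $S$. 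In any tubing these assemble into a laminar family whose ``$S$-shadow'' is a laminar set of intervals of $[n]$.

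Next I would construct the contraction. Given a maximal tubing $T$ of $\AAA(P)$, let $\gamma_1, \ldots, \gamma_\ell$ be the $\subseteq$-maximal intervals of $[n]$ appearing as type-(a) shadows. Collapsing each $\gamma_i$ to a single antichain element yields a candidate maximal tubing $T'$ of $\AAA(P_\ell)$, carrying exactly the tubes of types (b)--(e) of $T$. The residual information---the laminar structure inside each $\gamma_i$, together with how the enveloping tubes of types (b)--(d) touch the two endpoints of $\gamma_i$---should biject with a maximal tubing $U_i$ of the cyclohedron on $|\gamma_i|$ vertices, with a cyclic ordering of $\gamma_i$ encoded by how $U_i$ is placed. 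Descents should split additively: $\des(T) = \des(T') + \sum_i \des(U_i)$.

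The main obstacle will be this third step: certifying that the local-at-$S$ data really forms a cyclohedron tubing rather than an associahedron tubing. The naive expectation from $S$ being a chain is that each block would contribute the type-A Narayana polynomial $N_k(x)$; the conjecture asserts $B_k(x)$ instead, and the discrepancy must be generated by the way tubes of types (b) and (c) wrap around the two endpoints of $\gamma_i$ and identify them into a cyclic boundary. I would attack this by induction on $|\gamma_i|$, peeling off the outermost type-(b)/(c) tubes and reducing to a graph associahedron of a cycle graph via the correspondence between poset associahedra of tree posets and graph associahedra from Section~\ref{subsubsec:graph_ass_poset_ass}. Once the local model is secured, the cycle-type counts $\frac{n!}{\prod i^{c_i} c_i!}$ drop out of the exponential formula, and the $\gamma$-positivity corollary follows from the palindromicity of each $B_k(x)$.
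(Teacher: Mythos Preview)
The statement you are attempting to prove is Conjecture~\ref{con:recurrence3}; the paper states it in the ``Further directions'' section as an open problem and offers no proof. There is therefore nothing in the paper to compare your attempt against.

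On the substance of your plan: the exponential-formula reformulation and the identification of $B_k(x)$ as the cyclohedron $h$-polynomial are sensible starting points, but the bijection you outline does not close. The $\subseteq$-maximal type-(a) tubes in a maximal tubing $T$ are disjoint sub-intervals of $S$ that need not cover $S$, so collapsing them does not replace $S$ by anything of a prescribed size, let alone by the antichain $A_\ell$ that defines $P_\ell$; even when they do cover $S$, the collapsed points inherit the chain order of $S$ rather than becoming incomparable. Separately, the blocks $\gamma_i$ you extract are consecutive intervals of $[n]$, whereas the right-hand side of the identity sums over all $\sigma \in \SSS_n$ and hence over arbitrary set partitions equipped with cyclic orderings; you have not explained how non-interval cycle structures are to arise from your construction, and the remark that the cyclic ordering is ``encoded by how $U_i$ is placed'' does not bridge this gap. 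Finally, you yourself flag the cyclohedron identification as ``the main obstacle'' and leave it as a plan of attack rather than an argument. As it stands the proposal is a heuristic outline, and the conjecture remains open.
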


    The type B Narayana polynomials above also show up as the rank-generating function of the type B analogue $\text{NC}^B_n$ of the lattice of non-crossing partitions (see \cite{reiner1997non}) and the $h$-polynomials of type B associahedra (see \cite{simion2003type}). In particular, the sum of the coefficients in $B_{n+1}(x)$ is $\binom{2n}{n}$, which is called type B Catalan number.

\section*{Acknowledgements}
    The authors are grateful to Vic Reiner and Pavel Galashin for their guidance. We also thank Colin Defant for helpful discussions about stack-sorting.

\bibliography{bibliography}
\bibliographystyle{alpha}

\end{document}